\newcommand{\lapB}{\Delta_\sfg\,}
\newcommand{\tildina}{~}
\newcommand{\R}{\mathbb{R}}
\newcommand{\M}{\mathbb{M}}
\newcommand{\FF}{\mathscr{E}}
\newcommand{\mM}{\mathrm{X}}
\newcommand{\Mm}{\mathrm{X}}
\newcommand{\Xm}{\mathrm{X}}
\newcommand{\FX}{F}
\newcommand{\Fm}{F}
\newcommand{\Mn}{\mathrm{M}^n}
\newcommand{\CC}{\mathscr{C}}
\newcommand{\PP}{\mathscr{P}}
\newcommand{\sfS}{\mathsf{S}}
\newcommand{\sfs}{\mathsf{s}}
\newcommand{\sfE}{\mathsf{E}}
\renewcommand{\SS}{\mathcal{S}}
\newcommand{\sfu}{\gamma}
\newcommand{\sfg}{\mathsf{g}}
\newcommand{\pr}{U}
\newcommand{\U}{e}
\newcommand{\restr}[1]{\lower3pt\hbox{$|_{#1}$}}
\newcommand{\Restr}[1]{\Big|_{#1}}
\newcommand{\ggamma}{{\mbox{\boldmath$\gamma$}}}
\newcommand{\ssigma}{{\mbox{\boldmath$\sigma$}}}
\newcommand{\eps}{\varepsilon}
\newcommand{\rhozero}{\rho_0}
\newcommand{\la}{\langle}
\newcommand{\ra}{\rangle}
\newtheorem{teo}{Theorem}[section]
\newtheorem{lemma}[teo]{Lemma}
\newtheorem{remark}[teo]{Remark}
\newtheorem{corollario}[teo]{Corollary}
\newtheorem{prop}[teo]{Proposition}
\newcommand{\topref}[2]{\stackrel{\eqref{#1}}#2}
\numberwithin{equation}{section}
\newenvironment{proof}{\removelastskip\par\medskip
\noindent{\em Proof.} \rm}{\penalty-20\null\hfill$\square$\par\medbreak}
\newcommand{\freccia}{\rightarrow}
\newcommand{\Vol}{{\rm V}}
\renewcommand{\d}{{\rm d}}
\newcommand{\Dist}{\mathsf{d}}
\begin{document}

\title{Eulerian calculus for the displacement convexity in the
  Wasserstein distance}   
\author{Sara Daneri\\
  S.I.S.S.A., Trieste
  \footnote{S.I.S.S.A., Via Beirut 2-4, 34014, Trieste, Italy.\hfill\break 
  \texttt{e-mail:} \textsf{daneri@sissa.it}.}
  \and 
  Giuseppe Savar\'e\\
  Universit\`a di Pavia
  \footnote{Department of Mathematics, Via
    Ferrata 1, 27100, Pavia, Italy.\hfill\break
    \texttt{e-mail:} \textsf{giuseppe.savare@unipv.it},\hfill\break
  \texttt{web:} \textsf{http://www.imati.cnr.it/$\sim$savare/}}}
\date{January 15, 2008}    
\maketitle
\begin{abstract}
  In this paper we give a new proof
  of the (strong) displacement convexity of a class of integral functionals
  defined on a compact Riemannian manifold satisfying a lower
  Ricci curvature bound.
  Our approach does not rely on existence and regularity
  results for optimal transport maps on Riemannian manifolds, but
  it is based on the Eulerian point of view recently introduced by
  \textsc{Otto-Westdickenberg} in \cite{OttoW} and
  on the metric characterization of the gradient flows generated by
  the functionals in the Wasserstein space.
  \end{abstract}
  \textbf{Keywords:} Gradient flows, displacement convexity, heat and porous
  medium equation, nonlinear diffusion,
  optimal transport, Kantorovich-Rubinstein-Wasserstein distance,
  Riemannian manifolds with a lower Ricci curvature bound.
\section{Introduction}

In this paper we give a new proof, based on a gradient flow approach
and on the Eulerian point of view
introduced by \cite{OttoW},
of the so called ``displacement convexity''
for integral functionals as
\begin{equation}\label{entropintro}
  \FF(\mu):=\int\nolimits_{\M}\U(\rho)\,\d\Vol +\U'(\infty)\,\mu^\perp(\M), \quad
  \rho=\frac{\d\mu}{\d\Vol },
\end{equation}
where $\mu$ is a Borel probability measure %
on a compact, connected
Riemannian manifold without boundary $(\M,\sfg)$, $\Vol$ is the volume
measure on $\M$ induced by the metric tensor $\sfg$,
$\mu^\perp$ is the singular part of $\mu$ with respect to $\Vol$,
$\U:[0,+\infty)\to \R$ is a smooth convex function
satisfying the so called McCann conditions (see \eqref{1} below),
and $\U'(\infty)=\lim\limits_{r\to +\infty}\frac{\U(r)}{r}$.
When $\U$ has a superlinear growth, $\U'(\infty)=+\infty$ so that
$\mu$ should be absolutely continuous with respect to $\Vol$ when
$\FF(\mu)$ is finite.
\paragraph{Displacement convexity for integral functionals.}
The notion of \emph{displacement convexity} has been introduced by
\textsc{McCann} \cite{MC2} to study the behavior of integral
functionals like \eqref{entropintro} along optimal transportation
paths, i.e.\ geodesics in the space of Borel probability measures
$\PP(\M)$  endowed with the $L^2$-Kantorovich-Rubinstein-Wasserstein distance.

Recall that (the square of) this distance can be defined by the following optimal transport problem
\begin{equation}\label{Wass}
  \begin{aligned}
    W^2_2(\mu^0,\mu^1)&:=\min\Big\{
    \int\nolimits_{\M\times\M}\Dist^2(x,y)\,\d\ssigma(x,y):
    \ssigma\in\PP(\M\times\M),\\&
    \ssigma(\M\times B)=\mu^0(B),\
    \ssigma(B\times \M)=\mu^1(B)\quad
    \forall\,B\text{ Borel set in }\M\Big\},
  \end{aligned}
\end{equation}
for the cost function induced by the Riemannian distance $\Dist$
on the manifold $\M$.
We keep the usual notation to
denote by $\PP_2(\M)$ the metric space $(\PP(\M),W_2)$, that is called Wasserstein space; being
$\M$ compact, $W_2$ induces the topology of the weak convergence of
probability measures (i.e., the weak$^*$ topology associated
to the duality of $\PP(\M)$ with $C^0(\M)$).

As in any metric space, (minimal, constant speed) \emph{geodesics} can be
defined
as curves $\mu:s\in [0,1]\mapsto \mu^s\in \PP_2(\M)$
between $\mu^0$ and $\mu^1$ satisfying
\begin{equation}\label{geod}
W_2(\mu^r,\mu^s)=|s-r|\,W_2(\mu^0,\mu^1)\quad\forall\:\,0\leq r\leq s\leq1.
\end{equation}
A functional $\FF:\PP(\M)\freccia(-\infty,+\infty]$ is
then \emph{(strongly) displacement convex} (or, more generally,
\emph{displacement $\lambda$-convex} for some $\lambda\in \R$)
if, \emph{for all} Wasserstein geodesics $\{\mu^s\}_{0\leq s\leq
  1}\subset\PP_2(\M)$, we have
\begin{equation}\label{convexl}
  \FF(\mu^s)\leq(1-s)\FF(\mu^0)+s\FF(\mu^1)-\frac{\lambda}{2} s(1-s)
  W_2^2(\mu^0,\mu^1),\quad\forall\,s\in[0,1].
\end{equation}
A weaker notion is also often considered: one can ask that there
exists \emph{at least one} geodesic connecting $\mu^0$ to $\mu^1$ along which
\eqref{convexl} holds.

The term ``displacement convexity'' arises from the
strictly related concept of ``displacement interpolation'' introduced
by \cite{MC2} in the Euclidean case $\M=\R^d$; in a general metric
setting, property \eqref{convexl} is simply called, as in the
Riemannian case, ``$\lambda-$geodesic convexity'' (or ``geodesic
convexity'' if $\lambda=0$).

\noindent It is possible to show \cite{Ambrosio-Gigli-Savare05} that
the measures $\mu^s$ can also be defined through the formula
\begin{equation}
  \label{new:1}
  \mu^s(B):=\ssigma\big(\{(x,y)\in \R^d\times\R^d: (1-s)x+s y\in B\}\big),\quad
  \text{where }\ssigma\text{ is a minimizer of
  \eqref{Wass}.}
\end{equation}
A similar construction can also be performed in a Riemannian manifold
\cite{Lott-Villani07,Sturm06I,Lisini07}:
the segments $s\mapsto (1-s)x+sy$ should be substituted
by a Borel 
map $\ggamma:\M\times\M\to C^0([0,1];\M)$
that at each couple $(x,y)\in\M\times\M$ associate
a (minimal, constant speed) geodesic $s\mapsto \ggamma^s(x,y)$ in $\M$
connecting $x$ to $y$.
We have the representation formula
\begin{equation}
  \label{new:2}
   \mu^s(B):=\ssigma\big(\{(x,y)\in \M\times\M: \ggamma^s(x,y)\in B\}\big),\quad
  \text{where }\ssigma\text{ is a minimizer of
  \eqref{Wass}.}
\end{equation}
After the pioneering paper \cite{MC2}, the notion of displacement
convexity for integral functionals found applications in many
different fields, as Functional inequalities
\cite{Otto-Villani00,Agueh-Ghoussoub-Kang04,CMS1}, generation, contraction, and
asymptotic properties of diffusion equations and Gradient flows
\cite{Otto,Agueh05,OttoW,Ambrosio-Gigli-Savare05,Carrillo-McCann-Villani06,Ambrosio-Savare06},
Riemannian Geometry and synthetic study of Metric-Measure spaces
\cite{Sturm06I,Lott-Villani07}.

In the context of Riemannian manifolds it turns out that
displacement $\lambda$-convexity of certain classes of entropy
functionals is equivalent to a lower bound for the Ricci curvature of
the manifold. The connection between displacement convexity and Ricci
curvature, introduced by \cite{Otto-Villani00}, was then further deeply
studied by \cite{Otto-Villani00,CMS1,CMS2,Sturm06I}; the equivalence
has been proved by Sturm and Von Renesse in \cite{SturmVonR}, who
considered the case in which the domain of the functional consists
only of measures that are absolutely continuous with respect to the volume
measure, and then completed by Lott and Villani \cite{Lott-Villani07} (with
the remarks made in \cite{FigVill}, where convexity in the strong form
has been proved), who extended the previous results
to the functionals defined by \eqref{entropintro} on all $\PP(\M)$.
We refer to the forthcoming monograph \cite{VillaniOldNew} for further
references, details, and discussions.

The strategy followed by the authors of \cite{CMS1} (and by all the
following contributions) in order to
characterize the displacement convexity of entropy functionals relies
on a characterization of optimal transportation and Wasserstein
geodesics \cite{MC1} and on a
careful study of the Jacobian properties of the exponential function which
are crucial to estimate the integral functionals along this class of curves.
The lack of regularity of Wasserstein geodesics and the 
lack of global smoothness of the squared distance function $\Dist^2$ on the
manifold $\M$ (due to the existence of the cut-locus) require a
careful use of non-smooth analysis arguments and non trivial approximation processes to extend the results to geodesics between arbitrary measures (see \cite{Lott-Villani07, FigVill}).

The main result is the following
\begin{teo}\label{teo0}
  $\mathrm{(I)}$ If $\U\in   C^\infty(0,+\infty)$ satisfies the \textsc{McCann}
  conditions:
  \begin{equation}
    \pr(\rho):=\rho \U'(\rho)-\big(\U(\rho)-\U(0_+)\big)\geq0,
    \qquad
    \rho \pr'(\rho)-\biggl(1-\frac{1}{n}\biggr)\pr(\rho)\geq0,\quad
    n:=\dim(\M)>1
    \label{1}
  \end{equation}
  and $\M$ has \emph{nonnegative} Ricci curvature, 
  then the functional $\FF$ defined by \eqref{entropintro}
  is \emph{(strongly) displacement convex.}
  
  $\mathrm{(II)}$ If $\FF$ is the relative entropy functional,
  corresponding to $\U(\rho)=\rho \log\rho$ (which satisfies
  \eqref{1} in any dimension) in \eqref{entropintro},
  and there exists $\lambda\in\R$ such that 
  \begin{equation}\label{riclambda}
    \mathrm{Ric}_x\,(\xi,\xi)\geq\lambda \la \xi,\xi\ra_{\sfg_x}
    \quad\forall\,x\in\M,\quad\forall\,\xi\in T_x\M, 
  \end{equation}
  then the functional $\FF$ defined by \eqref{entropintro}
  is \emph{(strongly) displacement $\lambda$-convex.}
\end{teo}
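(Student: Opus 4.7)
The plan is to bypass any reference to optimal transport maps, the
exponential function, and the cut-locus of $\M$, and to obtain
displacement $\lambda$-convexity of $\FF$ as a by-product of a
\emph{metric characterization of the gradient flow} driven by $\FF$ in
$\PP_2(\M)$, combined with the Eulerian calculus of \cite{OttoW}. For
a smooth, strictly positive initial datum $\rho_0\in C^\infty(\M)$ I
would first identify the Wasserstein gradient flow of $\FF$ with the
nonlinear diffusion $\partial_t\rho_t=\lapB \pr(\rho_t)$, i.e.\ with
the continuity equation $\partial_t\rho_t+\mathrm{div}(\rho_t v_t)=0$
driven by $v_t=-\nabla\U'(\rho_t)$; classical parabolic regularity on
the compact manifold $\M$ keeps $\rho_t$ smooth and strictly positive
for all $t>0$.

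The core of the argument is an Eulerian derivation of the Evolution
Variational Inequality
\begin{equation*}
  \frac{\d}{\d t}\,\tfrac12\, W_2^2(\rho_t,\sigma)+\frac{\lambda}{2}
  W_2^2(\rho_t,\sigma)\le \FF(\sigma)-\FF(\rho_t)\qquad
  \forall\,\sigma\in\PP(\M).
\end{equation*}
Following \cite{OttoW}, I would differentiate
$t\mapsto\tfrac12 W_2^2(\rho_t,\sigma)$ twice along the flow by means
of the Benamou--Brenier representation of $W_2$ and of the continuity
equation. The algebraic step that extracts the $\lambda$-term is
Bochner's identity
\begin{equation*}
  \tfrac12 \lapB |\nabla\phi|^2
  =|\nabla^2\phi|^2+\la\nabla\phi,\nabla \lapB\phi\ra
  +\mathrm{Ric}(\nabla\phi,\nabla\phi),
\end{equation*}
applied to a Kantorovich potential $\phi_t$ associated with the
optimal transport from $\rho_t$ to $\sigma$: the Ricci lower bound
\eqref{riclambda} produces the contribution $\lambda|\nabla\phi_t|^2$,
while the McCann conditions \eqref{1} are exactly what is needed to
guarantee that the nonlinear terms coming from the pressure
$\pr(\rho_t)$ combine with a nonnegative sign, the second
condition in \eqref{1} playing the role of a dimension-dependent
counterpart of the elementary inequality
$|\nabla^2\phi|^2\ge \tfrac1n(\lapB\phi)^2$.

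Once the EVI is established on the dense subset of smooth strictly
positive densities, the general metric principle of
\cite{Ambrosio-Gigli-Savare05} delivers displacement
$\lambda$-convexity of $\FF$ along \emph{every} $W_2$-geodesic, as a
lower-semicontinuous functional generating an
EVI$_\lambda$-semigroup is automatically $\lambda$-geodesically convex
on its whole domain. The passage to an arbitrary $\mu\in\PP(\M)$ is
then obtained through a standard approximation argument exploiting
the lower semicontinuity of $\FF$, the stability of Wasserstein
geodesics under weak convergence of their endpoints, and the
compactness of $\PP(\M)$. Part~(I) corresponds to $\lambda=0$ with a
general McCann function $\U$, and part~(II) to $\U(\rho)=\rho\log\rho$
under the Ricci bound \eqref{riclambda}.

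The technical heart, and main obstacle, is the rigorous justification
of the Eulerian computation of
$\frac{\d^2}{\d t^2}\,\tfrac12 W_2^2(\rho_t,\sigma)$: this requires
fine regularity estimates on $\rho_t$ and on the Kantorovich
potentials $\phi_t$, which are merely semiconcave on $\M$ because of
the cut-locus. The delicate issue is to show that Bochner's identity
can be used at a level of regularity compatible with the pressure
nonlinearity $\pr(\rho_t)$, and that the coupling between the McCann
conditions \eqref{1} and the Ricci bound reproduces precisely the
constant $\lambda$ in the EVI above. This quantitative interplay
between Eulerian estimates and the metric framework is the technical
core of the whole argument.
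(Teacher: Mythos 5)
Your high-level architecture (identify the nonlinear diffusion as the Wasserstein gradient flow, prove an EVI$_\lambda$ on smooth positive densities, then invoke the metric principle ``EVI $\Rightarrow$ geodesic $\lambda$-convexity'') coincides with the paper's. But the step you yourself call the ``technical heart'' --- differentiating $t\mapsto\tfrac12W_2^2(\rho_t,\sigma)$ along the flow and applying Bochner's identity to a \emph{Kantorovich potential} of the optimal transport from $\rho_t$ to $\sigma$ --- is precisely the obstruction the Eulerian method is designed to avoid, and as stated it is a genuine gap: Kantorovich potentials on a manifold are only semiconcave (cut-locus), Bochner's identity cannot be applied to them without the deep regularity/approximation machinery of the Lagrangian proofs, and you give no way around this. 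The paper never touches optimal potentials. Instead, given $\nu,\mu\in\PP_2^{ar}(\M)$ it picks an $\varepsilon$-optimal \emph{smooth} Benamou--Brenier curve $(\rho^s,\phi^s)\in\CC(\nu,\mu)$, deforms it by the time-rescaled semigroup $\tilde\rho^s_t:=\SS_{st}(\rho^s)$ (so that $s=0$ stays at $\nu$ and $s=1$ moves to $\mu_t$), defines $\tilde\phi^s_t$ by the smooth elliptic equation $-\nabla\cdot(\tilde\rho^s_t\nabla\tilde\phi^s_t)=\partial_s\tilde\rho^s_t$, and proves the pointwise identity
\begin{equation*}
  \frac12\frac{\partial}{\partial t}\tilde A^s_t+\frac{\partial}{\partial s}\FF(\tilde\rho^s_t\Vol)=s\,\tilde D^s_t ,
\end{equation*}
where Bochner's formula, the Ricci bound and the McCann conditions are applied only to these smooth objects to get $\tilde D^s_t\le-\lambda\tilde A^s_t$. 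Integrating in $s$ and $t$ and using only the \emph{inequality} $W_2^2(\nu,\mu_t)\le\int_0^1\tilde A^s_t\,\d s$ (no differentiation of $W_2^2$ itself, and only a first derivative in $t$, not two) yields the integrated EVI. For $\lambda\neq0$ an extra layer is needed because the rescaling produces the factor $s$ in $s\tilde D^s_t$: the paper multiplies by $e^{2\lambda st}$, reparametrizes the initial curve to be Lipschitz (Lemma \ref{0}), and treats $\lambda<0$ and $\lambda>0$ separately; your sketch does not address this.

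A second, smaller gap is the final extension to arbitrary measures. Approximating the \emph{endpoints} of a geodesic and using stability of geodesics under weak convergence only produces convexity along \emph{some} limiting geodesic, i.e.\ the weak form, not the strong (every-geodesic) displacement convexity claimed in the statement. The paper instead extends the \emph{semigroup} from $\PP_2^{ar}(\M)$ to all of $\PP_2(\M)$ using the $\lambda$-contraction and the approximation in energy of \cite{Ambrosio-Buttazzo88} (Theorem \ref{thm:extension}); once the EVI holds for the extended flow with arbitrary reference measures, Theorem \ref{convex_1} gives the inequality along \emph{every} geodesic directly, with no approximation of geodesics at all.
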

\begin{remark}
  \upshape
  \label{rem:examples}
  Besides the logarithmic entropy corresponding to $\U(\rho)=\rho\log
  \rho$ (and $\pr(\rho)=\rho$), typical examples of functionals that satisfy properties
  \eqref{1} are
  \begin{equation}\label{ex}
    \U(\rho)=\tfrac1{m-1}\rho^m,\quad
    \pr(\rho)=\rho^m,\quad
    m\geq1-\tfrac{1}{n}.
  \end{equation}
  We recall that assumptions \eqref{1} imply the
  convexity of the function $\rho\mapsto \U(\rho)$ (since the dimension
  $n$ is greater than $1$, they are in fact more restrictive).
\end{remark}
\paragraph{Aim of the paper: an Eulerian approach to displacement convexity.}
In this paper we present an alternative proof of Theorem \ref{teo0},
which does not rely on the existence and smoothness of optimal
transport maps and geodesics for the Wasserstein distance.

Our strategy can be described in three steps:
\begin{enumerate}
  \item
    Following the approach suggested by \textsc{Otto-Westdickenberg}
    in \cite{OttoW}, we work in the subspace $\PP_2^{ar}(\M)$ 
    of measures with
    smooth and positive densities and we
    use the ``Riemannian'' formula
    for the Wasserstein distance, originally introduced in the Euclidean
    framework by \textsc{Benamou-Brenier} \cite{Benamou-Brenier00}:
    if $\mu^i=\rho^i\,\Vol\in \PP_2^{ar}(\M)$,
    $i=0,1$, then \cite[Prop. 4.3]{OttoW}
  \begin{equation}\label{riemann0}
    W_2^2(\mu^0,\mu^1)=\underset{\CC(\mu^0,\mu^1)}{\inf}\Bigl
    \{\int_0^1\int_{M}|\nabla\phi^s|^2\rho^s\,\d\Vol \,\d
    s\Bigr\}\qquad
    \forall\, \mu^0,\mu^1\in \PP_2^{ar}(\M)
  \end{equation}
  where
  \begin{equation}
  \begin{aligned}  
    \CC(\mu^0,\mu^1)=\Big\{&(\rho,\phi): \rho\in
    C^{\infty}([0,1]\times\M;\R_+),\quad
    \phi\in C^{\infty}([0,1]\times\M)\\ 
    &\partial_s\rho^s+\nabla\cdot(\rho^s\nabla\phi^s)=0\, \text{ in
    }(0,1)\times\M,\quad \mu^i=\rho^i\,\Vol\Big\}.
  \end{aligned}
  \label{eq:DS:32}
  \end{equation}
  Even though the Wasserstein space can't be endowed with a
  smooth Riemannian structure, \eqref{eq:DS:32}
  still shows a ``Riemannian'' characterization of the
  Wasserstein distance on $\PP_2^{ar}(\M)$.
\item
  The second important fact, originally showed by
  the so-called ''Otto calculus'' in 
  \cite{Otto}, is
  that
  the nonlinear diffusion equation 
  \begin{equation}\label{eq0}
    \partial_t\rho_t-\lapB \pr(\rho_t)=0 \quad\text{ in
    }[0,+\infty)\times\M,
    \qquad
    \rho\restr{t=0}=\rho_0,
  \end{equation}
  where $U:\R^+\freccia\R$ is the function defined in \eqref{1} and
  $\lapB$ is the Laplace-Beltrami operator on $\M$,
  is the gradient flow of the functional \eqref{entropintro}
  in $\PP_2(\M)$. Indeed, \eqref{eq0} corresponds to the heat equation if $U$ is the logarithmic entropy and to the porous medium equation if $U$ is defined by \eqref{ex}.
  
  Starting directly from \eqref{riemann0} and owing to the fact
  that the flow generated by \eqref{eq0}
  preserves smooth and positive densities, when
  $\mathrm{Ric}(\M)\ge0$ we shall show that
  the measures $\mu_t=\tildina\rho_t\Vol\in\tildina\PP_2^{ar}(\M)$
  associated to the solutions of
  \eqref{eq0} also solve
  the Evolution Variational
  Inequality (E.V.I.) 
  \begin{align}\label{flussograd0}
    \frac{1}{2}\frac{\d^+}{\d t}
    W_2^2(\nu,\mu_t)\leq\FF(\nu)-\FF(\mu_t)\quad
    \forall\,t\geq0,\,\nu\in \PP_2^{ar}(\M),
  \end{align}
  which has been introduced in \cite{Ambrosio-Gigli-Savare05}
  as a purely metric characterization of the gradient flows
  of geodesically convex functionals in metric spaces
  (and in particular
  in $\PP_2(\R^d)$); here 
  \begin{equation}
    \label{rightd}
    \frac{\d ^+}{\d t}\zeta(t)=
    \limsup_{h\downarrow0}\frac{\zeta(t+h)-\zeta(t)}h
  \end{equation}
  for every real function $\zeta:[0,+\infty)\to\R$.
  
  When $\mathrm{Ric}\,(\M)\geq\lambda$ (a shorthand for
  \eqref{riclambda}), we also show that
  the solutions of the heat equation satisfy the
  modified inequality
  \begin{align}\label{flussograd0l}
    \frac{1}{2}\frac{\d^+}{\d t}W_2^2(\nu,\mu_t)+\frac{\lambda}{2}W_2^2(\nu,\mu_t)\leq\FF(\nu)-\FF(\mu_t)\quad
    \forall\,t\geq0,\,\nu\in \PP_2^{ar}(\M),
  \end{align}
  where $\FF$ is the relative entropy functional whose integrand function is
  $\U(\rho)=\rho\log\rho$. Note that \eqref{flussograd0l} reduces to
  \eqref{flussograd0} when $\lambda=0$.
  In order to prove \eqref{flussograd0} and
  \eqref{flussograd0l}, we propose an ``Eulerian'' strategy
  which could be adapted to more general situations.
\item
  The third crucial fact is the following: whenever a functional
  $\FF$ satisfies \eqref{flussograd0}
  (or, more generally, \eqref{flussograd0l})
  for a given semigroup $\SS_t:\mu_0=\rho_0\Vol\mapsto
  \mu_t=\rho_t\Vol$ in $\PP_2^{ar}(\M)$,
  $\FF$ is displacement convex (resp.\ displacement $\lambda$-convex).
  Thus the question of the behavior of $\FF$ along geodesics can
  be reduced to a differential estimate of $\FF$ along the smooth and
  positive solutions of 
  its gradient flow.
\end{enumerate}
\paragraph{Plan of the paper.}
In Section 2 we present the main ideas of our approach in the simplified
(finite-dimensional and smooth) setting of geodesically convex
functions on Riemannian manifolds.
We think that these ideas are sufficiently general to be useful in
other circumstances, at least for distances
which admits a Riemannian characterization as \eqref{riemann0},
see e.g.\ \cite{Dolbeault-Nazaret-Savare-toapp,Carrillo-Lisini-Savare-inprep}

After a brief review of the definition of (gradient) $\lambda$-flows
in arbitrary metric spaces (basically following the
ideas of \cite{Ambrosio-Gigli-Savare05}),
we present in Section 3 our first result, showing that the existence
of a flow satisfying the E.V.I.\ \eqref{flussograd0l}
(even on a dense subset of initial data, such as $\PP_2^{ar}(\M)$)
entails the (strong) displacement $\lambda$-convexity of the functional $\FF$.

Following the strategy explained in the second section, in the
last two sections 
we prove the differential estimates showing that \eqref{eq0} satisfies
\eqref{flussograd0} (in Section 4) or, in the case
of the Heat equation,
\eqref{flussograd0l} (in Section 5).
\section{Gradient flows and geodesic convexity in a smooth setting}
\paragraph{Contraction semigroups and action integrals.}
In order to explain the main point of our strategy, let us first
consider the simple setting of a smooth function
$\Fm:\Mm\freccia\R$ on a complete Riemannian manifold $\Mm$ with metric
$\la \cdot,\cdot\ra_g$, (squared) norm $|\xi|_g^2=
\la \xi,\xi\ra_g$, and the endowed
Riemannian distance
\begin{equation}
  \label{eq:DS:3}
  d^2(u,v):=\min\Big\{\int\nolimits_0^1
  \big|\dot{\gamma}^s|_g^2\,\d s,
  \quad\gamma:[0,1]\freccia\Mm,\:\gamma^0=v,\ \gamma^1=u\Big\}.
\end{equation}
In a smooth setting, the geodesic $\lambda$-convexity of $\Fm$ can be
expressed through the differential condition
\begin{equation}
  \label{eq:DS:2}
  \frac {\d^2}{\d s^2}\Fm(\gamma^s)\ge \lambda\, |\dot\gamma^s|_g^2
\end{equation}
along any geodesic curve $\gamma$ minimizing
\eqref{eq:DS:3}.
As we discussed in the introduction, the direct computation of
\eqref{eq:DS:2}
could be difficult in a non-smooth, infinite dimensional setting; it
is therefore important to find equivalent conditions which avoid
twofold differentiation along geodesics.
One possibility, suggested in \cite{OttoW}, is to
find equivalent conditions to geodesic $\lambda$-convexity in terms of
the gradient flow generated by $\Fm$.

Let us recall that the gradient flow of $\Fm$
is a continuous semigroup of (time-dependent) maps $\sfS_t:\Mm
\to \Mm$, $t\in [0,+\infty)$,
which at every initial datum $u$ associate
the curve $u_t:=\sfS_t(u)$ solution of the differential equation
 \begin{equation}\label{flusso}
   \dot u_t=-\nabla \Fm(u_t)\quad\forall\,t\ge 0,\quad
   u_0=u.
\end{equation}
It is well known that, when $\Fm$ is geodesically $\lambda$-convex,
$\sfS_t$ is $\lambda$-contracting, i.e.\
\begin{equation}
  \label{eq:DS:1}
  d^2(\sfS_t(u),\sfS_t(v))\le e^{-2\lambda t}d^2(u,v)
  \qquad \forall\, u,v\in \Mm.
\end{equation}
By the semigroup property, \eqref{eq:DS:1} is also equivalent to
the differential inequality (see \eqref{rightd})
\begin{equation}
  \label{eq:DS:6}
  \frac {\d^+}{\d t}d^2(\sfS_t(u),\sfS_t(v))\Big|_{t=0}\le
  -2\lambda\, d^2(u,v)\quad
  \forall\, u,v\in \Mm.
\end{equation}
\cite{OttoW} reverts this argument and 
observes that it could be easier to directly prove
\eqref{eq:DS:6} by a
differential estimate
involving only the action of the semigroup along smooth curves;
as a byproduct, one should obtain the convexity of $\Fm$.
To this aim, they consider a smooth curve $\gamma^s$, $s\in [0,1]$,
connecting $v$ to $u$, 
and the action integral $\mathscr A_t$ associated
to its  smooth perturbation
\begin{equation}
  \label{eq:DS:4}
  \sfu^s_t:=\sfS_t(\gamma^s),\quad
  A^s_t:=  \big|\partial_s \sfu^s_t\big|_g^2,\quad
  \mathscr A_t:=\int\nolimits_0^1 A^s_t
  \,\d s,
\end{equation}
where $\partial_s \gamma,\partial_t \gamma$ denotes the tangent vectors
in $T_\gamma\Mm$ obtained by
differentiating w.r.t.\ $s$ and $t$ respectively.
Since, by the very definition of $d$,
\begin{equation}
  \label{eq:DS:7}
  d^2(\sfS_t(v),\sfS_t(u))\le \mathscr A_t
\end{equation}
and for every $\eps>0$
one can always find a curve $\gamma^s$ so that $\mathscr A_0\le
d^2(u,v)+\eps$
(in a smooth setting one can take $\eps=0$),
\eqref{eq:DS:6} surely holds if one can prove that
\begin{equation}
  \label{eq:DS:8}
  \frac {\d^+}{\d t}\mathscr A_t\Big|_{t=0}\le -2\lambda\,\mathscr A_0,
  \quad\text{or its pointwise version}\quad
  \frac {\partial}{\partial t}\Big|_{t=0}A^s_t
  \le -2\lambda\,A^s_0.
\end{equation}
Having obtained the contraction property from
\eqref{eq:DS:8}, it still remains open how to deduce
that $\Fm$ is geodesically convex.
Notice that along an arbitrary curve $\eta^s$
\begin{equation}
  \label{eq:DS:9}
  \frac{\partial}{\partial s}
  \Fm(\eta^s)=\la\nabla \Fm(\eta^s),\partial_s \eta^s\ra_g=
  -\la\partial_r\sfS_r(\eta^s)\restr{r=0},\partial_s \eta^s\ra_g;
\end{equation}
applied to $\eta^s:=\sfu^s_t$, \eqref{eq:DS:9} and the semigroup property
$\sfS_r(\sfu^s_t)=\sfu^s_{t+r}$ yield
\begin{equation}
  \label{eq:DS:9bis}
  \frac{\partial}{\partial
    s}
  \Fm(\sfu^s_t)=
  -\la\partial_t \sfu^s_t,\partial_s \sfu^s_t\ra_g.
\end{equation}
\emph{In a smooth setting} we can assume that $\gamma^s$ is
a minimal geodesic; operating a further differentiation with
respect to $s$, we obtain
\begin{align}
  \label{eq:DS:10}
  \frac{\partial^2}{\partial
    s^2}
  \Fm(\sfu^s)&\topref{eq:DS:9}=
  -\frac{\partial}{\partial
    s}\la\partial_t \sfu^s_t,\partial_s \sfu^s_t\ra_g
  \Restr{t=0}=
  -\la D_{\partial_s}\partial_t \sfu^s_t,\partial_s \sfu^s_t\ra_g
  -\la\partial_t \sfu^s_t,D_{\partial_s}\partial_s \sfu^s_t\ra_g\Big|_{t=0}
  \\&=
  -\la D_{\partial_s}\partial_t \sfu^s_t,\partial_s \sfu^s_t\ra_g\Big|_{t=0}
  =
  -\la D_{\partial_t}\partial_s \sfu^s_t,\partial_s \sfu^s_t\ra_g\Big|_{t=0}=
  -\frac 12 \frac \partial{\partial t}
  \la\partial_s \sfu^s_t,\partial_s \sfu^s_t\ra_g\Big|_{t=0}
  \notag\\&\label{eq:DS:11}
  \topref{eq:DS:4} =-\frac 12 \frac \partial{\partial t}\Big|_{t=0} A^s_t
  \topref{eq:DS:8}\ge
  \lambda\,
  \big|\partial_s \sfu^s\big|_g^2,
\end{align}
where we used the standard properties of the
covariant differentiations $D_{\partial_s}, D_{\partial_t}$ 
and, in \eqref{eq:DS:10},
the fact that at $t=0$ $D_{\partial_s}\partial_s \sfu^s_t=0$,
being $\sfu^s_t=\gamma^s$ a geodesic.
\paragraph{A metric derivation of convexity.}
Even if the previous differential argument shows that \eqref{eq:DS:8}
implies geodesic $\lambda$-convexity, it still requires nice smooth
properties on geodesics and covariant differentiation, which could be hard
to extend to a non smooth setting.

This is not at all surprising, since the contraction property
\eqref{eq:DS:6}
and its action-differential characterization \eqref{eq:DS:8} do not
carry all the information linking the semigroup $\sfS$ to $\Fm$:
in order to conclude the argument
in \eqref{eq:DS:10} we had therefore to insert the information coming from
\eqref{eq:DS:9}.

To overcome these difficulties, we shall deal with a
more precise metric characterization of $\sfS$ than \eqref{eq:DS:1}.
As it has been proposed and studied in \cite{Ambrosio-Gigli-Savare05},
gradient flows of geodesically $\lambda$-convex functionals
in ``almost'' Euclidean settings should satisfy a purely metric formulation
in terms of the Evolution Variational Inequality
\begin{equation}\label{evi0}
  \frac{1}{2}\frac{\d^+}{\d t}d^2(\sfS_t( u),v)+
  \frac \lambda 2d^2(\sfS_t( u),v)+\Fm(\sfS_t(u))
  \leq \Fm(v), \quad\forall\,v\in\mM,\, t>0.
\end{equation} 
It can be proved (see \cite{Ambrosio-Savare06}) that 
\eqref{evi0} characterizes $\sfS$ and implies the contractivity
property \eqref{eq:DS:1}.

As we discussed before, here we invert the usual procedure
(starting from a convex functional, construct its gradient flow)
and we suppose that there exists a smooth flow $\sfS_t$ satisfying
\eqref{evi0}. The following result, whose proof will be postponed
(in a more general form) to Theorem \ref{convex_1} in the next Section,
shows that $\Fm$ is geodesically $\lambda$-convex.
\begin{teo}
  \label{convex_0}
  Suppose that there exists a continuous semigroup of maps 
  $\sfS_t\in C^0(\Mm;\Mm)$, $t\ge0$, satisfying \eqref{evi0}.
  Then for every (minimal, constant speed) geodesic $\gamma:[0,1]\to\Mm$
  \begin{equation}
    \label{eq:DS:12}
    \Fm(\gamma^s)\le (1-s)\Fm(\gamma^0)+s \Fm(\gamma^1)-
    \frac \lambda2s(1-s)d^2(\gamma^0,\gamma^1),\quad\forall\,s\in[0,1]
  \end{equation}
  i.e.\ $\Fm$ is (strongly) geodesically $\lambda$-convex. 
\end{teo}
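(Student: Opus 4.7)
The plan is to apply the EVI \eqref{evi0} twice, with $u:=\gamma^s$ fixed and with $v=\gamma^0$, $v=\gamma^1$ respectively, take the $(1-s),s$-convex combination, and show that the resulting combination of right derivatives of squared distance is non-negative by an elementary universal metric inequality. No further regularity of $\gamma$ beyond the constant-speed geodesic property is needed, and in particular no twofold differentiation along $\gamma$ nor smoothness of covariant differentiation enters.

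Set $L:=d(\gamma^0,\gamma^1)$; the geodesic hypothesis gives $d(\gamma^s,\gamma^0)=sL$ and $d(\gamma^s,\gamma^1)=(1-s)L$. Writing \eqref{evi0} at $t=0$ with $u=\gamma^s$ and $v\in\{\gamma^0,\gamma^1\}$, multiplying the two inequalities by $(1-s)$ and $s$ and adding, the identity $(1-s)s^2+s(1-s)^2=s(1-s)$ collapses the $\lambda$-terms to $\tfrac{\lambda}{2}s(1-s)L^2$ and yields
\begin{equation*}
\tfrac12\Big[(1-s)\tfrac{\d^+}{\d t}d^2(\sfS_t(\gamma^s),\gamma^0)\big|_{t=0}
+s\,\tfrac{\d^+}{\d t}d^2(\sfS_t(\gamma^s),\gamma^1)\big|_{t=0}\Big]
\leq (1-s)\Fm(\gamma^0)+s\Fm(\gamma^1)-\Fm(\gamma^s)-\tfrac{\lambda}{2}s(1-s)L^2.
\end{equation*}
The conclusion \eqref{eq:DS:12} follows once one shows that the bracketed quantity on the left is non-negative.

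To obtain this, I would establish the universal metric inequality, valid in any metric space: for every $z\in\Mm$ and every constant-speed geodesic $\gamma$ of length $L$, $(1-s)d^2(z,\gamma^0)+s\,d^2(z,\gamma^1)\geq s(1-s)L^2$, with equality at $z=\gamma^s$. This is immediate from the algebraic identity $(1-s)a^2+sb^2-s(1-s)(a+b)^2=\bigl((1-s)a-sb\bigr)^2\geq 0$ applied with $a=d(z,\gamma^0)$, $b=d(z,\gamma^1)$, combined with the triangle inequality $a+b\geq L$. Applied to $z=\sfS_t(\gamma^s)$, it ensures that $\phi(t):=(1-s)d^2(\sfS_t(\gamma^s),\gamma^0)+s\,d^2(\sfS_t(\gamma^s),\gamma^1)-s(1-s)L^2$ is non-negative for all $t\geq 0$ and vanishes at $t=0$; hence $\limsup_{t\downarrow 0}\phi(t)/t\geq 0$, and by subadditivity of $\limsup$ this bound transfers to the sum of the two individual right derivatives, giving the required nonnegativity.

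The main conceptual step is the universal metric inequality above: the EVI supplies only an \emph{upper} bound on each derivative $\tfrac{\d^+}{\d t}d^2(\sfS_t(\gamma^s),\gamma^r)|_{t=0}$, whereas what is needed is a \emph{lower} bound on the specific convex combination of them. That such a lower bound is provided for free by the triangle inequality and a convexity-type estimate of the squared distance centered at the geodesic barycenter is what keeps the argument purely metric, bypassing every smoothness, geodesic regularity, or curvature assumption on $\Mm$—an essential feature for the intended Wasserstein application taken up in Section~3.
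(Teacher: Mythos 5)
Your proof is correct and is essentially the paper's own argument (given for Theorem \ref{convex_1}): take the $(1-s),s$ convex combination of the EVI written for $v=\gamma^0$ and $v=\gamma^1$ with $u=\gamma^s$, and obtain the needed lower bound from exactly the paper's elementary inequality \eqref{eq:DS:18} combined with the triangle inequality, i.e.\ \eqref{eq:DS:43}. The only difference is in the bookkeeping: the paper uses the time-integrated form \eqref{eq:DS:13tris} of the EVI and lets $t\downarrow0$ at the end (which also gives the quantitative estimate \eqref{eq:DS:42} along approximate geodesics, exploited later for $\lambda>0$), whereas you argue directly at $t=0$ and handle the right Dini derivatives by subadditivity of $\limsup$ (legitimate here, since the EVI bounds each derivative from above) -- both executions are sound.
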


\paragraph{E.V.I. through action-differential estimates.}
Thanks to Theorem \ref{convex_0},
it is possible to prove the geodesic $\lambda$-convexity of $\Fm$ by
exhibiting a flow $\sfS$ satisfying the E.V.I.\ \eqref{evi0}.
According to the general strategy suggested by
\cite{OttoW},
we want to reduce \eqref{evi0} to a suitable family
of differential inequalities satisfied by the action $A^s_t$
of \eqref{eq:DS:4}.

The idea here is to consider a different family of perturbations
of a given smooth curve $\gamma:[0,1]\to\Mm$, still
induced by the semigroup $\sfS$.
In fact, differently from the contraction estimate \eqref{eq:DS:6}
where we are flowing both the points $u,v$ through $\sfS_t$,
in \eqref{evi0} we want to keep the point $v:=\gamma^0$ fixed and to vary
only $u:=\gamma^1$.
If $\gamma^s$ is a smooth curve connecting them,
it is then natural to consider the new families (see
Figure 1)
\begin{equation}
  \label{eq:DS:19}
  \tilde \sfu^s_t:=\sfS_{st}(\gamma^s)=\sfu^s_{st},
  \quad
  \tilde \Fm^s_t:=\Fm(\tilde\gamma^s_t)\qquad
  s\in [0,1],\ t\ge0.
\end{equation}
\begin{center}
  
  
  



  

  

  
  \includegraphics{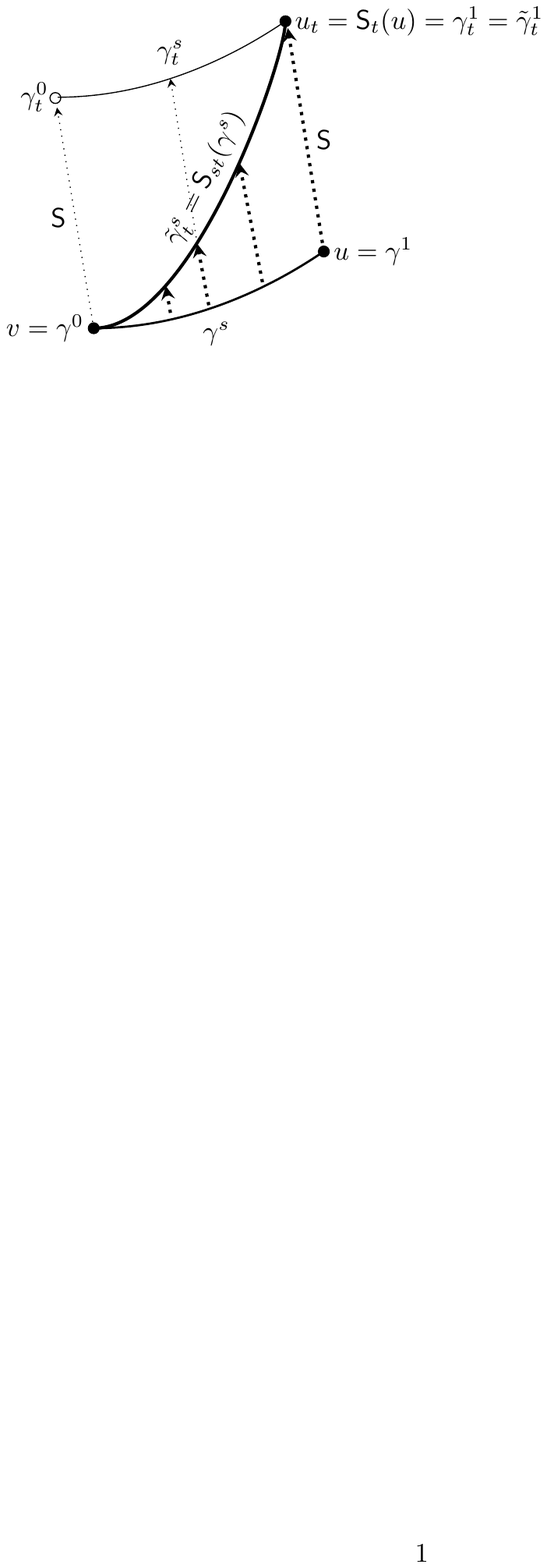}
  \smallbreak
  Figure 1: variation of the curve $\gamma^s$ under the action
  of the semigroup $\sfS$.
\end{center}
Notice that $\tilde \gamma^s_0=\gamma^s$, $\tilde \gamma^0_t=\gamma^0=v$,
$\tilde \gamma^1_t=\sfS_t(\gamma^1)=\sfS_t(u)$.
As before, we introduce the quantities
\begin{equation}
  \label{eq:DS:20}
  \tilde A^s_t:=\big|\partial_s \tilde \sfu^s_t\big|_g^2,
  \quad
  \tilde{\mathscr A}_t:=\int\nolimits_0^1 \tilde A^s_t\,\d s.
\end{equation}
\begin{teo}[A differential inequality linking action and flow]
  \label{thm:main_smooth}
  Suppose that for every smooth curve $\gamma:[0,1]\to \Mm$
  the quantities $\tilde A^s_t,\tilde \Fm^s_t$ induced
  by the flow $\sfS$ through \eqref{eq:DS:19},\eqref{eq:DS:20} satisfy
  \begin{equation}
    \label{eq:DS:26}
    \frac 12\frac\partial{\partial t}\tilde A^{s}_t+
    \frac{\partial}{\partial s}\tilde \Fm^s_t
    \le -\lambda\, s\,
    \tilde A^s_t,\quad\forall\,t\geq0.
  \end{equation}
  Then $\sfS$ satisfies \eqref{evi0},
  it is the gradient flow of $\Fm$, and $\Fm$ is geodesically 
  $\lambda$-convex.
  Moreover, it is sufficient to check \eqref{eq:DS:26} at $t=0$.
\end{teo}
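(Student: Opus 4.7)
The plan is to deduce the E.V.I.~\eqref{evi0} directly from \eqref{eq:DS:26} by integrating in $s$, specializing the auxiliary curve to a minimal geodesic, and then propagating in $t$ via the semigroup property; once \eqref{evi0} is in hand, the geodesic $\lambda$-convexity of $\Fm$ follows from Theorem~\ref{convex_0}.

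First I would fix $u,v\in\Mm$ and let $\gamma:[0,1]\to\Mm$ be a minimal, constant-speed geodesic joining $v=\gamma^0$ to $u=\gamma^1$ (available by Hopf--Rinow). Integrating \eqref{eq:DS:26} in $s$ over $[0,1]$ and noticing that $\tilde\gamma^0_t=\sfS_0(v)=v$ while $\tilde\gamma^1_t=\sfS_t(u)$, the $\partial_s\tilde F^s_t$ term telescopes to $\Fm(\sfS_t(u))-\Fm(v)$, giving
\begin{equation*}
  \tfrac{1}{2}\tfrac{\d}{\d t}\tilde{\mathscr A}_t + \Fm(\sfS_t(u)) - \Fm(v) \;\le\; -\lambda\int_0^1 s\,\tilde A^s_t\,\d s.
\end{equation*}

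At $t=0$, constant speed of $\gamma$ yields $\tilde A^s_0=|\dot\gamma^s|_g^2\equiv d^2(v,u)$ for every $s$, so $\tilde{\mathscr A}_0=d^2(v,u)$ and $\int_0^1 s\,\tilde A^s_0\,\d s=\tfrac{1}{2}d^2(v,u)$. Moreover, for every $t\ge0$ the curve $s\mapsto\tilde\gamma^s_t$ still connects $v$ to $\sfS_t(u)$, so $d^2(v,\sfS_t(u))\le\tilde{\mathscr A}_t$ with equality at $t=0$. Sandwiching yields
\begin{equation*}
  \tfrac{1}{2}\tfrac{\d^+}{\d t}\Big|_{t=0} d^2(v,\sfS_t(u)) \;\le\; \tfrac{1}{2}\tfrac{\d}{\d t}\Big|_{t=0}\tilde{\mathscr A}_t \;\le\; \Fm(v)-\Fm(u)-\tfrac{\lambda}{2}d^2(v,u),
\end{equation*}
which is \eqref{evi0} at $t=0$. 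Replacing $u$ by $\sfS_{t_0}(u)$ and invoking $\sfS_t\circ\sfS_{t_0}=\sfS_{t+t_0}$ promotes this to \eqref{evi0} at an arbitrary $t_0\ge0$, and Theorem~\ref{convex_0} then closes the argument and identifies $\sfS$ as the gradient flow of $\Fm$.

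For the final assertion, given $t_0>0$ and a smooth $\gamma$, I would set $\eta^s:=\sfS_{s t_0}(\gamma^s)=\tilde\gamma^s_{t_0}$; the semigroup identity gives $\tilde\eta^s_t=\sfS_{st}(\eta^s)=\sfS_{s(t+t_0)}(\gamma^s)=\tilde\gamma^s_{t+t_0}$, so \eqref{eq:DS:26} for $\eta$ at $t=0$ is exactly \eqref{eq:DS:26} for $\gamma$ at time $t_0$. The delicate point is the sandwich step: since $\tilde{\mathscr A}$ majorizes $d^2$ only away from $t=0$, just the right-upper derivative of $d^2$ can be controlled, which is luckily exactly what appears in \eqref{evi0}. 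In the metric-space counterpart (Theorem~\ref{convex_1}) one additionally has to replace minimal geodesics by $\varepsilon$-almost minimizers of the Benamou--Brenier action \eqref{riemann0}, absorbing a vanishing error in $\tilde{\mathscr A}_0$ — that density/approximation step is where the main technical work of the next section will lie.
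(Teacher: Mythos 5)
Your proposal is correct and follows essentially the same route as the paper's proof: integrate \eqref{eq:DS:26} in $s$, evaluate at $t=0$ along a minimal constant-speed geodesic using $d^2(v,\sfS_t(u))\le\tilde{\mathscr A}_t$ with equality at $t=0$, and propagate both the E.V.I.\ and the hypothesis in $t$ via the semigroup identity $\tilde\sfu^s_{t_0+t}=\sfS_{st}\tilde\sfu^s_{t_0}$. The only (correct) extra touch is your explicit handling of the sandwich step through the upper right derivative, which the paper leaves implicit.
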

\begin{proof}
  Let us first observe that \eqref{eq:DS:26} yields, after
  an integration with respect to $s$ in $[0,1]$,
  \begin{equation}
    \label{eq:DS:24}
    \frac 12\frac \d{\d t}\tilde{\mathscr A}_t+\tilde \Fm^1_t-
    \tilde \Fm^0_t\le
    -\lambda \int\nolimits_0^1 s\tilde A^s_t\,\d s.
  \end{equation}
  By the semigroup property, it is sufficient to
  prove \eqref{evi0} at $t=0$.
  We choose a geodesic $\gamma^s$ connecting $v$ to $u$ and
  we consider the curves given by
  \eqref{eq:DS:19}. Since
  \begin{equation}
    \label{eq:DS:27}
    d^2(v,\sfS_t(u))\le \int_0^1 \tilde A^s_t\,\d s=
    \tilde{\mathscr A}_t,\quad
    d^2(v,u)=\int_0^1 \tilde A^s_0\,\d s=\tilde{\mathscr A}_0,
    \quad
    \tilde \Fm^1_t=\Fm(\sfS_t(u)),\quad
    \tilde \Fm^0_t=\Fm(v),
  \end{equation}
  by \eqref{eq:DS:24} at $t=0$ we obtain
  \begin{equation}
    \label{eq:DS:28}
    \frac 12 \frac {\d^+}{\d t}d^2(\sfS_t(u),v)\Big|_{t=0}+\Fm(u)-
    \Fm(v)\le
    -\lambda\int\nolimits_0^1 s\, \tilde A^s_0\,\d s=
    -\frac \lambda2 d^2(u,v),
  \end{equation}
  where in the last identity we used the fact that
  $\gamma^s$ is a geodesic and therefore
  $\tilde A^s_0=
  |\partial_s\gamma^s|_g^2$
  is constant in $[0,1]$ and takes the value
  $d^2(\gamma^0,\gamma^1)=d^2(v,u)$.
  
  Since $\tilde\sfu^s_{t_0+t}=\sfS_{st}
  \tilde\sfu^s_{t_0}$ by the semigroup property,
  if $\sfS$
  satisfies \eqref{eq:DS:26} at the initial time $t=0$ for an arbitrary
  smooth curve
  $\gamma$, then it also satisfies \eqref{eq:DS:26} for $t>0$.  
\end{proof}
Our last result provides a simple criterion to check \eqref{eq:DS:26}:
\begin{teo}
  \label{thm:main_reduction}
  Suppose that the flow $\sfS:[0,+\infty)
  \times \Mm\to\Mm$ satisfies \eqref{eq:DS:9}
  for any smooth curve
  $\gamma^s$, let $\gamma^s_t,\tilde\gamma^s_t,A^s_t,\tilde
  A^s_t,\tilde \Fm^s_t$ be
  defined as
  in \eqref{eq:DS:4}, \eqref{eq:DS:19}, and \eqref{eq:DS:20},
  and let us set
  \begin{equation}
    \label{eq:DS:29a}
    \tilde D^s_r:=\frac{1}{2}\lim_{h\downarrow0}h^{-1}\Big(
    \big|\partial_s \gamma^s_{sr+h}\big|_g^2-
    \big|\partial_s \gamma^s_{sr}\big|_g^2\Big),
  \end{equation}
  Then
  \begin{equation}
    \label{eq:DS:21}
    \frac 12\frac\partial{\partial t}\tilde A^{s}_t+
    \frac{\partial}{\partial s}\tilde \Fm^s_t=s \tilde D^s_{t}.
  \end{equation}
  Furthermore, if \eqref{eq:DS:8} holds, then
  \begin{equation}
    \label{eq:DS:22}
    \tilde D^s_t\le -\lambda\, \tilde A^s_t 
  \end{equation}
  and \eqref{eq:DS:26} holds, too, so that $\Fm$ is geodesically 
  $\lambda$-convex,
  and $\sfS$ is its gradient flow.
\end{teo}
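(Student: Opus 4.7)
The plan is first to extract from the defining identity \eqref{eq:DS:9} the pointwise evolution equation $\partial_\tau\sfS_\tau(p)=-\nabla\Fm(\sfS_\tau(p))$ for every $p\in\Mm$ and every $\tau\ge0$: reading \eqref{eq:DS:9} with $\eta^s$ ranging over arbitrary smooth curves through $p$ gives $\partial_\tau\sfS_\tau(p)|_{\tau=0}=-\nabla\Fm(p)$, and the semigroup property propagates the identity to every $\tau\ge0$. Setting $\Phi(\sigma,\tau):=\sfS_\tau(\gamma^\sigma)$ so that $\tilde\gamma^s_t=\Phi(s,st)$, the chain rule produces the decomposition
\[\partial_s\tilde\gamma^s_t=V+tW,\qquad \partial_t\tilde\gamma^s_t=sW,\]
with $V:=(\partial_\sigma\Phi)(s,st)$ and $W:=(\partial_\tau\Phi)(s,st)=-\nabla\Fm(\tilde\gamma^s_t)$. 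From here $\partial_s\tilde\Fm^s_t=\la\nabla\Fm(\tilde\gamma^s_t),\partial_s\tilde\gamma^s_t\ra_g=-\la V,W\ra_g-t|W|_g^2$ is immediate.

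Next I would compute $\tfrac12\partial_t\tilde A^s_t=\la D_{\partial_t}\partial_s\tilde\gamma^s_t,\partial_s\tilde\gamma^s_t\ra_g$ by interchanging mixed covariant derivatives on the parametrized surface $(s,t)\mapsto\tilde\gamma^s_t$: $D_{\partial_t}\partial_s\tilde\gamma=D_{\partial_s}\partial_t\tilde\gamma=D_{\partial_s}(sW)=W+sD_{\partial_s}W$, where the extra summand $W$ comes from differentiating the scalar factor $s$, and using $D_{\partial_s}W=-\operatorname{Hess}\Fm(\partial_s\tilde\gamma^s_t,\cdot)^\sharp$ this becomes
\[\tfrac12\partial_t\tilde A^s_t=\la V,W\ra_g+t|W|_g^2-s\,\operatorname{Hess}\Fm(V+tW,V+tW).\]
Summing with $\partial_s\tilde\Fm^s_t$, the boundary terms cancel exactly and what remains is $-s\operatorname{Hess}\Fm(V+tW,V+tW)$. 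To identify this with $s\tilde D^s_t$, I would run the analogous covariant-derivative argument on the auxiliary surface $\beta(s,h):=\gamma^s_{st+h}$: since $\partial_h\beta=-\nabla\Fm(\beta)$ and $\partial_s\beta|_{h=0}=V+tW$, one obtains $\tilde D^s_t=\tfrac12\partial_h|\partial_s\beta|_g^2\big|_{h=0}=-\operatorname{Hess}\Fm(V+tW,V+tW)$, which establishes \eqref{eq:DS:21}.

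For \eqref{eq:DS:22} the idea is to reapply the contraction hypothesis \eqref{eq:DS:8}, but to a shifted initial curve. Fixing $t\ge0$ and taking $\eta^s:=\tilde\gamma^s_t$, the corresponding action $A_\eta(s,\tau):=|\partial_s\sfS_\tau(\eta^s)|_g^2$ coincides with $|\partial_s\gamma^s_{st+\tau}|_g^2$, so that $A_\eta(s,0)=\tilde A^s_t$ while $\partial_\tau A_\eta(s,\tau)|_{\tau=0}=2\tilde D^s_t$; assumption \eqref{eq:DS:8} then directly yields $\tilde D^s_t\le-\lambda\tilde A^s_t$. Combined with \eqref{eq:DS:21} this gives \eqref{eq:DS:26}, and Theorem \ref{thm:main_smooth} finishes the job, producing the EVI \eqref{evi0} and the geodesic $\lambda$-convexity of $\Fm$. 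The delicate point in the whole argument is the bookkeeping of \emph{partial} versus \emph{total} $s$-derivatives under the rescaling $\tau=st$: the scalar-derivative contribution $W=(\partial_s s)W$ in $D_{\partial_s}(sW)$ is exactly what cancels the stray terms $\la V,W\ra_g+t|W|_g^2$ produced by $\partial_s\tilde\Fm^s_t$.
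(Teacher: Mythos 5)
Your argument is correct in the smooth setting, and its second half (deducing \eqref{eq:DS:22} by applying the pointwise form of \eqref{eq:DS:8} to the shifted curve $s\mapsto\tilde\gamma^s_t$, whose flow trajectories are $\tau\mapsto\gamma^s_{st+\tau}$, and then concluding via Theorem \ref{thm:main_smooth}) is exactly what the paper does. Where you genuinely diverge is in the proof of the identity \eqref{eq:DS:21}: you first upgrade \eqref{eq:DS:9} to the pointwise ODE $\partial_\tau\sfS_\tau=-\nabla\Fm(\sfS_\tau)$ and then run a second-order Riemannian computation --- symmetry of mixed covariant derivatives on the surfaces $(s,t)\mapsto\tilde\gamma^s_t$ and $(s,h)\mapsto\gamma^s_{st+h}$, plus $D_{\partial_s}\nabla\Fm=\mathrm{Hess}\,\Fm(\partial_s\cdot,\cdot)^\sharp$ --- identifying both $\tfrac12\partial_t\tilde A^s_t+\partial_s\tilde\Fm^s_t$ and $s\tilde D^s_t$ with $-s\,\mathrm{Hess}\,\Fm(\partial_s\tilde\gamma^s_t,\partial_s\tilde\gamma^s_t)$. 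The paper instead never introduces $\mathrm{Hess}\,\Fm$ or covariant differentiation at all: it writes $\tilde\gamma^s_{t+h}=\tilde\gamma^s_{t,sh}$ with $\tilde\gamma^s_{t,\tau}=\sfS_\tau\tilde\gamma^s_t$, expands $\partial_s\tilde\gamma^s_{t+h}$ to first order in $h$, and uses only the elementary identity \eqref{eq:DS:32bis} together with \eqref{eq:DS:9} used as a pairing, so that the cross term becomes $-2h\,\partial_s\Fm(\tilde\gamma^s_{t+h})$ and \eqref{eq:DS:21} follows by letting $h\downarrow0$. Your route buys an explicit identification of the common value as a Hessian quadratic form (making the link with the classical argument \eqref{eq:DS:10}--\eqref{eq:DS:11} transparent), but it costs exactly the structure the section is trying to dispense with: it needs $\Fm\in C^2$, joint smoothness of the flow, and the torsion-free/covariant calculus, none of which survive in the Wasserstein setting that Lemma \ref{derivazione} is meant to mimic; the paper's proof requires only first-order differentiability, the semigroup property and \eqref{eq:DS:9}, and keeps $\tilde D^s_t$ as the purely metric quantity \eqref{eq:DS:29a}, which is precisely what the contraction-type hypothesis \eqref{eq:DS:8} controls.
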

\begin{proof}
  Let us set
  \begin{equation}
    \label{eq:DS:30}
    \tilde\gamma^s_{t,\tau}:=\sfS_{\tau}\tilde\gamma^s_t=\gamma^{s}_{st+\tau},
    \quad
    \tilde A^s_{t,\tau}:=\big|\partial_s \tilde\gamma^s_{t,\tau}\big|_g^2,
  \end{equation}
  so that
  \begin{equation}
    \label{eq:DS:31}
    \tilde\gamma^s_{t+h}=\tilde\gamma^s_{t,sh},\quad
    \partial_s \tilde \gamma^s_{t+h}=
    \partial_s \tilde\gamma^s_{t,\tau}+
    h\partial_\tau \tilde\gamma^s_{t,\tau}\Big|_{\tau=sh},\quad
    \tilde D^s_t=\frac12\frac\partial{\partial \tau} \tilde A^s_{t,\tau}\Big|_{\tau=0}
  \end{equation}
  Observe that
  the identity 
 \begin{equation}
   \label{eq:DS:32bis}
   |x+y|_g^2=2\la x+y,y\ra_g+|x|_g^2-|y|_g^2,\qquad
   \forall\, x,y\in T_\gamma\Mn
 \end{equation}
 yields
\begin{align*}
  \tilde A^s_{t+h}&=\big|\partial_s \tilde \gamma^s_{t+h}\big|_g^2
  \topref{eq:DS:31}=
  \big|\partial_s\tilde \gamma^s_{t,\tau}+
  h\partial_\tau \tilde\gamma^s_{t,\tau}\big|_g^2
   \Big|_{\tau=sh}\\&
  \topref{eq:DS:32bis}=
  \Big[2h\la \partial_s\tilde \gamma^s_{t,\tau}+
  h\partial_\tau \tilde\gamma^s_{t,\tau},\partial_\tau\tilde \gamma^s_{t,\tau}\ra+
  \big| \partial_s\tilde \gamma^s_{t,\tau}\big|_g^2-
  h^2 \big|\partial_\tau \tilde\gamma^s_{t,\tau}\big|_g^2\Big]_{\tau=sh}
  \\&=
  2 h\, \la \partial_s \tilde \sfu^s_{t+h},
  \partial_\theta
  \sfS_\theta(\tilde\gamma^s_{t+h}))\ra
  \Big|_{{\theta=0}}+
  \tilde A^s_{t,sh}-o(h)
  \topref{eq:DS:9}=
  -2h\,\frac\partial{\partial s}\Fm(\tilde\gamma^s_{t+h})+
   \tilde A^s_{t,sh}-o(h).
\end{align*}
We thus get
\begin{equation}
  \label{eq:DS:34}
  \frac 1{2h}\big(\tilde A^s_{t+h}-\tilde A^s_t\big)+
  \frac {\partial}{\partial s}\Fm(\tilde\gamma^s_{t+h})=
  \frac 1 {2h}\big(\tilde A^s_{t,sh}-\tilde A^s_t\big) -o(1),
\end{equation}
so that, passing to the limit as $h\downarrow0$ we get
\eqref{eq:DS:21}.
\end{proof}
\begin{remark}
  \label{rem:tedious}
  \upshape
  Notice that the remainder term $o(1)$ in \eqref{eq:DS:34}
  is non-negative, so it can be simply neglected,
  if one is just interested to the inequality
  \eqref{eq:DS:26}.
\end{remark}
\section{Gradient flows and geodesic convexity in a metric setting}
In this section we will briefly recall some basic definitions and
properties
of gradient flows in a metric setting and we will prove
Theorem \ref{convex_0} in a slightly more general framework.

Let $(\Xm,d)$ be a metric space (not necessarily complete)
and let $\FX:\Xm\to (-\infty,+\infty]$ be a lower semicontinuous
functional,
whose proper domain $D(\FX):=\big\{w\in \Xm:\FX(w)<+\infty\big\}$
is dense in $\Xm$ (otherwise we can always restrict
all the next statements to the closure of $D(\FX)$ in $X$).
We also assume that $\FX$ is bounded from below,
i.e.\ $F_{\rm inf}:=\inf_{u\in \Xm}F(u)>-\infty$.

A $C^0$-semigroup $\sfS$ in $C^0(\Xm;\Xm)$ is a family
$\sfS_t$, $t\ge0$, of continuous maps in $\Xm$ such that
\begin{equation}
  \label{eq:DS:35}
  \sfS_{t+h}(u)=\sfS_{h}\big(\sfS_t(u)\big),\quad
  \lim_{t\downarrow0}\sfS_t(u)=\sfS_0(u)=u\quad
  \forall\, u\in \Xm,\ t,h\ge0.
\end{equation}
Given a real number $\lambda\in \R$, we say that
$\sfS$ is the $\lambda$-(gradient) flow of $\FX$ if
it satisfies
\begin{subequations}
  \label{eq:1}
  \begin{gather}
    \label{sdom}\text{$\sfS_t(\Xm)\subset D(\FX)$ for every $t>0$;}\\
    \label{notincr}\text{the map $t\mapsto \FX(\sfS_t(u))$
      is not increasing in $(0,+\infty)$;}\\
    \label{evi1}
    \frac{1}{2}\frac{\d^+}{\d t}d^2(\sfS_t( u),v)+
    \frac \lambda 2d^2(\sfS_t( u),v)+\FX(\sfS_t(u))
    \leq \FX(v), \quad\forall\,u\in\Xm,\,v\in D(\FX),\, t\geq0.
  \end{gather}
\end{subequations}
Clearly, if $\sfS$ is a $\lambda$-flow for $\FX$, then it is also
a $\lambda'$-flow for every $\lambda'\le \lambda$.
The next proposition collects some useful properties of
$\lambda$-flows. 
\begin{prop}[Integral characterization of flows and contraction]
  \label{prop:useful}
  A $C^0$-semigroup $\sfS$ satisfies
  $(\ref{eq:1}a,b,c)$ if and only if
  it satisfies the following
  integrated form
  \begin{equation}
    \label{eq:DS:13tris}
    \frac {e^{\lambda (t_1-t_0)}}2
    d^2(\sfS_{t_1}(u),v)-
    \frac 1
    2d^2(\sfS_{t_0}(u),v)
    \le
    \sfE_\lambda(t_1-t_0)
    \Big(F(v)-F(\sfS_{t_1}(u))\Big)
    \quad
    \forall\,0\le t_0<t_1,
  \end{equation}
  for every $u\in \Xm,\ v\in D(\FX)$,
  where $\sfE_\lambda(t):=
  \int_0^t e^{\lambda r}\,\d r=
  \begin{cases}
    \frac{e^{\lambda t}-1}{\lambda}&\text{if }\lambda\neq0,\\
    t&\text{if }\lambda=0. 
  \end{cases}$\\
  In particular $\sfS$   satisfies the uniform regularization bound
  \begin{equation}
    \label{eq:DS:37}
    \FX(\sfS_t(u))\le \FX(v)+\frac 1{2\,\sfE_\lambda(t)}
    d^2(u,v)
    \quad
    \forall\, u\in \Xm,\ v\in D(\FX),\ t>0,
  \end{equation}
  the uniform continuity estimate
  \begin{equation}
    \label{eq:DS:35bis}
    d^2(\sfS_{t_1}(u),\sfS_{t_0}(u))\le
    2
    \sfE_{-\lambda}(t_1-t_0)\Big(\FX(\sfS_{t_0}u)-
    \FX_{\rm inf}\Big)
    \quad
    \forall\, u\in D(\FX),\ 0\le t_0\le t_1,
  \end{equation}
  and the $\lambda$-contraction property, i.e.
  \begin{equation}
    \label{eq:DS:36}
    d(\sfS_t(u),\sfS_t(v))\le e^{-\lambda t}d(u,v)\quad
    \forall\, u,v\in \Xm,\ t\ge0.
  \end{equation}
\end{prop}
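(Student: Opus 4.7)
The plan is to first establish the equivalence of the differential system \eqref{eq:1} with the integrated form \eqref{eq:DS:13tris}, and then to read off the three consequences \eqref{eq:DS:37}, \eqref{eq:DS:35bis}, and \eqref{eq:DS:36} directly from the integrated inequality, which is better suited to such manipulations than its differential counterpart.

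For the implication \eqref{eq:1}$\Rightarrow$\eqref{eq:DS:13tris}, I fix $u\in\Xm$, $v\in D(\FX)$ and set $\phi(t):=\tfrac12 d^2(\sfS_t(u),v)$, so that \eqref{evi1} reads $\frac{\d^+}{\d t}\phi(t)+\lambda\phi(t)\le \FX(v)-\FX(\sfS_t(u))$. Since $\FX(\sfS_t(u))\ge \FX(\sfS_{t_1}(u))$ for every $t\in[t_0,t_1]$ by \eqref{notincr}, multiplying by the integrating factor $e^{\lambda(t-t_0)}$ rewrites the estimate as $\frac{\d^+}{\d t}\bigl[e^{\lambda(t-t_0)}\phi(t)\bigr]\le e^{\lambda(t-t_0)}\bigl(\FX(v)-\FX(\sfS_{t_1}(u))\bigr)$, a differential inequality with a continuous right-hand side; the standard comparison lemma for upper-right derivatives of real functions then permits integration from $t_0$ to $t_1$, yielding \eqref{eq:DS:13tris}. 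For the converse, I write \eqref{eq:DS:13tris} with $t_0=t$ and $t_1=t+h$, divide by $h$, and pass to the $\limsup$ as $h\downarrow 0$: using $\sfE_\lambda(h)/h\to 1$, the continuity of $t\mapsto\sfS_t(u)$, and the lower semicontinuity of $\FX$ (which gives $\limsup_{h\downarrow 0}(-\FX(\sfS_{t+h}(u)))\le -\FX(\sfS_t(u))$), one recovers \eqref{evi1}.

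The finiteness property \eqref{sdom} comes from applying \eqref{eq:DS:13tris} with $t_0=0$, $t_1=t>0$ and any fixed $v\in D(\FX)$: dropping the non-negative $d^2$-term on the left and rearranging gives the regularization bound \eqref{eq:DS:37}, which in particular forces $\FX(\sfS_t(u))<+\infty$. The monotonicity \eqref{notincr} is obtained by choosing $v:=\sfS_{t_0}(u)$, replacing $u$ by $\sfS_{t_0}(u)$, and using the semigroup property on the interval $[0,t_1-t_0]$: the $d^2$-term at time $0$ vanishes, so non-negativity of the remaining $d^2$-term forces $\FX(\sfS_{t_1}(u))\le\FX(\sfS_{t_0}(u))$. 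Retaining the $d^2$-term this time, dividing through by $e^{\lambda(t_1-t_0)}$ (which converts $\sfE_\lambda$ into $\sfE_{-\lambda}$), and bounding $\FX(\sfS_{t_1}(u))\ge \FX_{\rm inf}$ yields the continuity estimate \eqref{eq:DS:35bis}.

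The main obstacle is deriving the $\lambda$-contraction \eqref{eq:DS:36}, since it requires evolving both arguments simultaneously. I would fix $u_0,v_0\in D(\FX)$ and apply the EVI \eqref{evi1} twice: once to the curve $t\mapsto\sfS_t(u_0)$ tested against the frozen point $\sfS_s(v_0)$, and once with the roles swapped. Setting $\Phi(t,s):=\tfrac12 d^2(\sfS_t(u_0),\sfS_s(v_0))$, the two inequalities read $\partial_t^+\Phi+\lambda\Phi\le \FX(\sfS_s(v_0))-\FX(\sfS_t(u_0))$ and $\partial_s^+\Phi+\lambda\Phi\le \FX(\sfS_t(u_0))-\FX(\sfS_s(v_0))$; summing them makes the $\FX$-terms cancel and yields $\partial_t^+\Phi+\partial_s^+\Phi+2\lambda\Phi\le 0$. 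Specializing to the diagonal $t=s$ via the chain-rule bound $\frac{\d^+}{\d t}\Phi(t,t)\le (\partial_t^+ +\partial_s^+)\Phi(t,t)$ (which holds because $\Phi$ is locally Lipschitz in each variable separately, by continuity of the semigroup), I obtain $\frac{\d^+}{\d t}\Phi(t,t)+2\lambda\Phi(t,t)\le 0$, and a Gr\"onwall argument applied to $t\mapsto e^{2\lambda t}\Phi(t,t)$ produces $d^2(\sfS_t(u_0),\sfS_t(v_0))\le e^{-2\lambda t}d^2(u_0,v_0)$ for $u_0,v_0\in D(\FX)$; extension to arbitrary $u_0,v_0\in\Xm$ follows from the density of $D(\FX)$ in $\Xm$ and continuity of $\sfS_t$. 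The delicate point is the chain-rule bound along the diagonal for upper-right derivatives, which requires a careful (but elementary) estimate of the difference quotients of $\Phi(t,t)$ via the triangle inequality for $d$.
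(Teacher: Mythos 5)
Your treatment of the equivalence and of the first two consequences is correct and essentially the paper's own argument: the implication \eqref{eq:1}$\Rightarrow$\eqref{eq:DS:13tris} by integrating factor plus a comparison lemma for upper right derivatives (the paper proves exactly such a lemma, \eqref{eq:DS:29b}, and applies it to $\zeta(t)=\tfrac{e^{\lambda t}}2 d^2(\sfS_t(u),v)+\int_{\bar t}^t e^{\lambda r}(\FX(\sfS_r(u))-\FX(v))\,\d r$, using \eqref{notincr} to freeze the entropy term), the converse by dividing by $t_1-t_0$ and letting $t_1\downarrow t_0$ with the lower semicontinuity of $\FX$, and \eqref{eq:DS:37}, \eqref{notincr}, \eqref{eq:DS:35bis} read off from \eqref{eq:DS:13tris} with the choices $t_0=0$ and $v=\sfS_{t_0}(u)$.

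The contraction step, however, contains a genuine gap. Your derivation hinges on the diagonal chain-rule bound $\frac{\d^+}{\d t}\Phi(t,t)\le \partial_t^+\Phi(t,t)+\partial_s^+\Phi(t,t)$ for the Dini derivatives of $\Phi(t,s)=\tfrac12 d^2(\sfS_t(u_0),\sfS_s(v_0))$, justified by the claim that $\Phi$ is ``locally Lipschitz in each variable separately, by continuity of the semigroup''. Continuity of a semigroup does not give any Lipschitz bound, and at this stage the only quantitative modulus available is \eqref{eq:DS:35bis}, which yields merely $d(\sfS_{t+h}u,\sfS_t u)=O(\sqrt h)$; moreover, even separate local Lipschitzness would not suffice, because in the splitting of the diagonal increment the quotient $h^{-1}[\Phi(t+h,t+h)-\Phi(t,t+h)]$ involves the $t$-increment at the \emph{moving} base point $s=t+h$, and controlling its $\limsup$ by $\partial_t^+\Phi(t,t)$ requires a uniformity (absolute continuity with a common integrable bound, as in the doubling-of-variables Gronwall lemmas of \cite{Ambrosio-Gigli-Savare05}) that your triangle-inequality remark does not provide without first proving local Lipschitz regularity of $t\mapsto\sfS_t u$. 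The paper circumvents the diagonal differentiation entirely: it applies the \emph{integrated} inequality \eqref{eq:DS:13bis} twice, to the asymmetric decomposition $d^2(\sfS_h(u),\sfS_h(v))-d^2(u,v)=\big[d^2(\sfS_h(u),\sfS_h(v))-d^2(\sfS_h(u),v)\big]+\big[d^2(\sfS_h(u),v)-d^2(u,v)\big]$, where in each bracket only one argument is flowed up to the same time $h$ against a frozen reference point (admissible by \eqref{sdom}); dividing by $h$, letting $h\downarrow0$ with the lower semicontinuity of $\FX$ cancels the entropy terms and gives \eqref{eq:DS:38}, and the elementary lemma \eqref{eq:DS:29b} then yields \eqref{eq:DS:36}. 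To repair your proof you should either adopt this decomposition or invoke a doubling lemma with the absolute-continuity hypotheses it requires, which are not established in your argument.
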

\begin{proof}
  Clearly \eqref{eq:DS:13tris} yields \eqref{sdom},
  being $D(\FX)\neq\emptyset$;
  \eqref{notincr} and \eqref{eq:DS:35bis} follow
  by taking $v:=\sfS_{t_0}(u)$ and 
  \eqref{evi1} can be proved by dividing both sides of \eqref{eq:DS:13tris}
  by $t_1-t_0$ and passing to the limit
  as $t_1\downarrow t_0$.
  In order to prove the converse implication,
  let us first observe that for a continuous
  real function $\zeta:[0,+\infty)\to
  \R$
  \begin{equation}
    \label{eq:DS:29b}
    \liminf_{h\downarrow 0}\frac{\zeta(t+h)-\zeta(t)}h\le0\quad
    \forall\, t>0\qquad
    \Longrightarrow\qquad
    \text{$\zeta$ is not increasing}.
  \end{equation}
  In fact, if $0\le t_0<t_0+\tau$ existed with
  $\delta:=\tau^{-1}\big(\zeta(t_0+\tau)-\zeta(t_0)\big)>0$,
  then a minimum point $\bar t\in [t_0,t_0+\tau)$ of $
  t\mapsto \zeta(t)-\zeta(t_0)-\delta(t-t_0)$
  would satisfy
  \begin{displaymath}
    \liminf_{h\downarrow 0}\frac{\zeta(\bar t+h)-\zeta(\bar t)}h-\delta\ge
    0,\quad
    \text{which contradicts \eqref{eq:DS:29b}}.
  \end{displaymath}
  \eqref{eq:DS:13tris} then follows by \eqref{evi1}, after a
  multiplication by $e^{\lambda t}$
  and choosing
  $$\zeta(t):=
  \frac {e^{\lambda t}}2d^2(\sfS_t(u),v)
    +\int_{\bar t}^t e^{\lambda r}
    \big(\FX(\sfS_r(u))-\FX(v)\big)\,\d r,\quad \bar t>0,$$
  and recalling the monotonicity property (\ref{eq:1}b).
  A similar argument shows that
  \begin{equation}
    \label{eq:DS:13bis}
     \frac 12d^2(\sfS_{t_1}(u),v)-
     \frac 12d^2(\sfS_{t_0}(u),v)+
     \frac\lambda2\int_{t_0}^{t_1}d^2(\sfS_r(u),v)\,\d r\le
     (t_1-t_0)\Big(F(v)-F(\sfS_{t_1}(u))\Big),
   \end{equation}
   for every $0\le t_0<t_1$, $u\in \Xm$, and $v\in D(\FX)$.
  In order to prove the $\lambda$-contracting property, we apply
  \eqref{eq:DS:13bis} obtaining
  \begin{align*}
    &d^2(\sfS_h(u),\sfS_h(v))-d^2(u,v)=
    d^2(\sfS_h(u),\sfS_h(v))-d^2(\sfS_h(u),v)+
    d^2(\sfS_h(u),v)-d^2(u,v)\\&\le
    -\lambda\int_0^h \Big(d^2(\sfS_h(u),\sfS_r(v))+
    d^2(\sfS_r(u),v)\Big)\,\d r+
    2h\Big(\FX(v)-\FX(\sfS_h(v))\Big).
  \end{align*}
  We divide this inequality by $h$ and we pass to the limit as
  $h\downarrow0$;
  the
  continuity
  of $\sfS_t$, the lower semicontinuity of $\FX$, and the semigroup
  property
  of $\sfS$ yield
  \begin{equation}
    \label{eq:DS:38}
    \frac {\d^+}{\d t}d^2(\sfS_t(u),\sfS_t(v))\le -2\lambda\,
    d^2(u,v)\qquad
    \forall\, u,v\in \Xm,\ t>0,
  \end{equation}
  which yields \eqref{eq:DS:36} thanks to \eqref{eq:DS:29b}.
 \end{proof}
We can now prove the main result of this section:
if a functional $\FX$ admits a $\lambda$-flow, then
$\FX$ is geodesically $\lambda$-convex.
\begin{teo}[Geodesic convexity via E.V.I.]
  \label{convex_1}
  Let us suppose that $\sfS$ is a $\lambda$-flow
  for the functional $\FX$, according to
  \emph{(\ref{eq:1}a,b,c)},
  and let $\gamma:[0,1]\to \Xm$ be a Lipschitz curve satisfying 
  \begin{equation}
    \label{eq:DS:41}
    d(\gamma^r,\gamma^s)\le L\,|r-s|,\quad
    L^2\le d^2(\gamma^0,\gamma^1)+\eps^2
    \quad\forall\, r,s\in [0,1],
  \end{equation}
  for some constant $\eps\ge0$.
  Then for every $t>0$ and $s\in [0,1]$
  \begin{equation}
    \label{eq:DS:42}
    \FX(\sfS_t(\gamma^s))\le (1-s)\FX(\gamma^0)+s\FX(\gamma^1)-
    \frac \lambda2s(1-s)d^2(\gamma^0,\gamma^1)
    +\frac {\eps^2}{2\sfE_\lambda(t)} s(1-s).
  \end{equation}
  In particular, when $\gamma$ is a geodesic (i.e.\ $\gamma$
  satisfies \eqref{eq:DS:41} with $L=d(\gamma^0,\gamma^1),\ \eps=0$), we have
  \begin{equation}
    \label{eq:DS:12a}
    F(\gamma^s)\le (1-s)F(\gamma^0)+s F(\gamma^1)-
    \frac \lambda2s(1-s)d^2(\gamma^0,\gamma^1),
  \end{equation}
  i.e.\ $F$ is (strongly) geodesically $\lambda$-convex. 
\end{teo}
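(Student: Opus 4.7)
The plan is to derive \eqref{eq:DS:42} from the integrated form \eqref{eq:DS:13tris} of the E.V.I., applied at the endpoints $\gamma^0,\gamma^1$ with $u=\gamma^s$. Taking $t_0=0$ and $t_1=t$ in \eqref{eq:DS:13tris}, for every $v\in D(\FX)$ and every $w\in \Xm$ one obtains the ``proximal'' bound
\begin{equation*}
  \FX(\sfS_t(w))\le \FX(v)+\frac 1{2\sfE_\lambda(t)}d^2(w,v)-\frac{e^{\lambda t}}{2\sfE_\lambda(t)}d^2(\sfS_t(w),v).
\end{equation*}
I would apply this with $w=\gamma^s$, first choosing $v=\gamma^0$ and then $v=\gamma^1$, and combine the two inequalities convexly with weights $(1-s)$ and $s$.

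In the resulting estimate, the ``distance-at-initial-time'' terms are controlled from above by the Lipschitz assumption \eqref{eq:DS:41}:
\begin{equation*}
  (1-s)\,d^2(\gamma^s,\gamma^0)+s\,d^2(\gamma^s,\gamma^1)\le L^2\bigl[(1-s)s^2+s(1-s)^2\bigr]=L^2 s(1-s).
\end{equation*}
For the ``distance-after-flow'' terms, which appear with a negative sign and therefore must be bounded from below, I would use the elementary identity $(1-s)a^2+s b^2-s(1-s)(a+b)^2=\bigl((1-s)a-s b\bigr)^2\ge 0$ applied to $a=d(\sfS_t(\gamma^s),\gamma^0)$, $b=d(\sfS_t(\gamma^s),\gamma^1)$, and then the triangle inequality $a+b\ge d(\gamma^0,\gamma^1)$. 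This gives
\begin{equation*}
  (1-s)\,d^2(\sfS_t(\gamma^s),\gamma^0)+s\,d^2(\sfS_t(\gamma^s),\gamma^1)\ge s(1-s)\,d^2(\gamma^0,\gamma^1).
\end{equation*}

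Plugging both bounds in, using $L^2\le d^2(\gamma^0,\gamma^1)+\eps^2$ and the identity $(e^{\lambda t}-1)/\sfE_\lambda(t)=\lambda$ (valid also when $\lambda=0$, by $\sfE_0(t)=t$), the coefficient of $d^2(\gamma^0,\gamma^1)$ collapses to $-\tfrac\lambda 2 s(1-s)$ and the leftover error is exactly $\eps^2 s(1-s)/(2\sfE_\lambda(t))$, proving \eqref{eq:DS:42}. For the geodesic case one has $\eps=0$ and $L=d(\gamma^0,\gamma^1)$; since $\sfS_t(\gamma^s)\to \gamma^s$ as $t\downarrow 0$ by the $C^0$-semigroup property and $\FX$ is lower semicontinuous, passing to the $\liminf$ in \eqref{eq:DS:42} gives \eqref{eq:DS:12a} (the case when $\FX(\gamma^0)$ or $\FX(\gamma^1)$ is $+\infty$ being trivial).

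The only non-routine step is the lower bound on the negative term: one needs both the arithmetic identity $(1-s)a^2+sb^2\ge s(1-s)(a+b)^2$ and the triangle inequality acting ``the wrong way'' to lower-bound $a+b$ by $d(\gamma^0,\gamma^1)$. Once this is in place, everything is algebra and a limit argument.
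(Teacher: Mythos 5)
Your proposal is correct and follows essentially the same route as the paper's own proof: apply the integrated E.V.I.\ \eqref{eq:DS:13tris} with $u=\gamma^s$ and $v=\gamma^0,\gamma^1$, take the convex combination with weights $(1-s)$ and $s$, bound the initial-time terms by $L^2s(1-s)$ via \eqref{eq:DS:41}, lower-bound the flowed terms via the elementary inequality \eqref{eq:DS:18} and the triangle inequality, and conclude by the identity $(e^{\lambda t}-1)/\sfE_\lambda(t)=\lambda$ and a $t\downarrow0$ limit using lower semicontinuity of $\FX$. No gaps; the explicit lower-semicontinuity argument in the limit step is exactly what the paper leaves implicit.
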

\begin{proof}
  Let $\gamma$ be satisfying \eqref{eq:DS:41} and
  let us set $\sfu^s_t:=\sfS_t(\gamma^s)$.
  Choosing $t_0=0$, $t_1=t$, $u:=\gamma^s$,
  and taking a convex combination
  of \eqref{eq:DS:13tris} written for
  $v:=\gamma^0$, and $v:=\gamma^1$, we get
  \begin{align}
    \frac{e^{\lambda t}}2 \Big((1-s)\,d^2(\sfu^s_{t},\gamma^0)
    +s\,d^2(\sfu^s_t,\gamma^1)\Big)
    &-
    \frac 12\Big((1-s)\,d^2(\gamma^s,\gamma^0)+
    s\,    d^2(\gamma^s,\gamma^1)\Big)\\&\le
    \label{eq:DS:14}
    \sfE_\lambda(t) \Big((1-s)F(\gamma^0)+sF(\gamma^1)-F(\sfu^s_t)\Big).
  \end{align}
  We now observe that the elementary inequality
  \begin{equation}
    \label{eq:DS:18}
    (1-s)a^2+sb^2\ge s(1-s)(a+b)^2\quad
    \forall\, a,b\in \R,\quad s\in [0,1],
  \end{equation}
  and the triangular inequality yield
  \begin{equation}\label{eq:DS:43}
    (1-s)d^2(\sfu^s_{t},\gamma^0)+sd^2(\sfu^s_t,\gamma^1)
    \topref{eq:DS:18}\ge
    s(1-s)\Big(d(\sfu^s_t,\gamma^0)+d(\sfu^s_t,\gamma^1)\Big)^2\ge
    s(1-s)d(\gamma^0,\gamma^1)^2.
  \end{equation}
  On the other hand, \eqref{eq:DS:41} yields
  \begin{equation}
    \label{eq:DS:17}
    (1-s)\,d^2(\gamma^s,\gamma^0)+
    s\,    d^2(\gamma^s,\gamma^1)\le
    L^2 s(1-s).
  \end{equation}
  Inserting \eqref{eq:DS:17} and \eqref{eq:DS:43} in \eqref{eq:DS:14}
  we
  obtain
  \begin{equation}
    \label{eq:DS:44}
    \frac{e^{\lambda t}-1}2 s(1-s)d^2(\gamma^0,\gamma^1)
    -\frac{\eps^2}2s(1-s)\le
    \sfE_\lambda(t) \Big((1-s)F(\gamma^0)+sF(\gamma^1)-F(\sfu^s_t)\Big).    
  \end{equation}
  Dividing then both sides of \eqref{eq:DS:44} by $\sfE_\lambda(t)$
  we get \eqref{eq:DS:42}; when $\eps=0$ we can pass
  to the limit as $t\downarrow0$ obtaining \eqref{eq:DS:12a}.
\end{proof}
We conclude this section by considering the case when the flow
$\sfS$ is only defined on a \emph{dense} subset $\Xm_0$ of $D(\FX)$.
In order to prove the geodesic convexity of $\FX$ in $\Xm$ by
Theorem \ref{convex_1} we first have
to extend $\sfS$ to the whole space $\Xm$.
This can be achieved by a density argument, if $\Xm$ is complete and the lower
semicontinuous
functional 
$\FX$ satisfies the following approximation
property:
\begin{equation}
  \label{eq:2}
  \text{$\forall\,u\in \Xm$ \quad $\exists\,u_n\in
      \Xm_0$:}\qquad \lim_{n\to\infty}d(u_n,u)=0,\quad
    \lim_{n\to\infty}\FX(u_n)=\FX(u).
  \end{equation}
We state the precise extension result in the next theorem.
\begin{teo}
  \label{thm:extension}
  Suppose that the functional $\FX$ and the subset $\Xm_0\subset
  D(\FX)$ satisfy
  \eqref{eq:2} and let $\sfS$ be a $\lambda$-flow for $\FX$ in
  $\Xm_0$.
  If $\Xm$ is complete,
  $\sfS$ can be extended to a unique $\lambda$-flow $\bar\sfS$
  in $\Xm$ and therefore $\FX$ is (strongly) geodesically $\lambda$-convex in $\Xm$.
\end{teo}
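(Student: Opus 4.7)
I would argue in three steps: extend $\sfS$ from $\Xm_0$ to a semigroup $\bar\sfS$ on all of $\Xm$ using the $\lambda$-contraction property \eqref{eq:DS:36} supplied by Proposition \ref{prop:useful} together with the completeness of $\Xm$; check that $\bar\sfS$ still satisfies the integrated E.V.I.\ \eqref{eq:DS:13tris} by passing to the limit in approximating sequences provided by \eqref{eq:2}; and then invoke the converse implication of Proposition \ref{prop:useful} to conclude that $\bar\sfS$ is a $\lambda$-flow for $\FX$ on $\Xm$ and apply Theorem \ref{convex_1} to obtain the geodesic $\lambda$-convexity.

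\textbf{Extension.} By Proposition \ref{prop:useful} applied to $\sfS$ on $\Xm_0$ one has the $\lambda$-contraction $d(\sfS_t(u),\sfS_t(v))\le e^{-\lambda t}d(u,v)$ for $u,v\in\Xm_0$. Given $u\in\Xm$, density (which is part of \eqref{eq:2}) provides $u_n\in\Xm_0$ with $u_n\to u$; the sequence $\sfS_t(u_n)$ is then Cauchy in the complete space $\Xm$, so I set $\bar\sfS_t(u):=\lim_{n\to\infty}\sfS_t(u_n)$. Contraction makes this definition independent of the approximating sequence and shows that each $\bar\sfS_t$ is globally $e^{-\lambda t}$-Lipschitz; the semigroup identity transfers by continuity. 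The strong continuity $\bar\sfS_t(u)\to u$ as $t\downarrow 0$ --- the only delicate point when $u\notin D(\FX)$, since the regularization bound \eqref{eq:DS:37} cannot be used directly at $u$ --- follows from the triangle-type estimate
\begin{equation*}
  d(\bar\sfS_t(u),u)\le (1+e^{-\lambda t})\,d(u_n,u)+d(\sfS_t(u_n),u_n),
\end{equation*}
by letting $t\downarrow 0$ first (using the $C^0$ property of $\sfS$ on $\Xm_0$) and then $n\to\infty$.

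\textbf{E.V.I.\ at the limit and conclusion.} Fix $u\in\Xm$ and $v\in D(\FX)$, and invoke \eqref{eq:2} to choose $u_n,v_n\in\Xm_0$ with $u_n\to u$, $v_n\to v$, and $\FX(v_n)\to\FX(v)$. Apply \eqref{eq:DS:13tris} to $\sfS$ with arguments $u_n,v_n$: the distance terms converge by continuity of $\bar\sfS_{t_0}$ and $\bar\sfS_{t_1}$, the contribution $\FX(v_n)$ converges by construction, and the negative term $-\FX(\sfS_{t_1}(u_n))$ is handled by the lower semicontinuity of $\FX$. This last step is the main technical point, since lower semicontinuity is precisely what allows one to replace $\liminf_n \FX(\sfS_{t_1}(u_n))$ by the smaller quantity $\FX(\bar\sfS_{t_1}(u))$ while preserving the direction of the inequality. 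The resulting \eqref{eq:DS:13tris} for $\bar\sfS$ then delivers all of \emph{(\ref{eq:1}a,b,c)} through Proposition \ref{prop:useful}, so $\bar\sfS$ is a $\lambda$-flow on $\Xm$, and uniqueness of the extension is immediate from $\lambda$-contraction. Applying Theorem \ref{convex_1} to $\bar\sfS$ yields \eqref{eq:DS:12a} along every geodesic of $\Xm$, which is the asserted strong geodesic $\lambda$-convexity of $\FX$.
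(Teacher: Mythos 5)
Your proposal is correct and follows essentially the same route as the paper: define $\bar\sfS_t(u)$ as the limit of $\sfS_t(u_n)$ using the $\lambda$-contraction \eqref{eq:DS:36} and completeness, verify the semigroup and $C^0$ properties, pass to the limit in the integrated inequality \eqref{eq:DS:13tris} with approximating sequences from \eqref{eq:2} and the lower semicontinuity of $\FX$, and conclude via Proposition \ref{prop:useful} and Theorem \ref{convex_1}. The only (harmless) difference is cosmetic: for the strong continuity at $t=0$ you use the $C^0$ property of $\sfS$ on $\Xm_0$ directly through a triangle estimate, whereas the paper invokes the uniform continuity bound \eqref{eq:DS:35bis} together with \eqref{eq:DS:36} and \eqref{eq:2}.
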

\begin{proof}
  Given $u\in \Xm$ and a sequence $u_n\in \Xm_0$
  as in (\ref{eq:2}), we can define
  \begin{equation}
    \label{eq:DS:23}
    \bar\sfS_t(u):=\lim_{n\to\infty}\sfS_t(u_n)\quad
    \forall\, t>0,
  \end{equation}
  where it is clear that the limit in \eqref{eq:DS:23} exists
  (being $X$ complete and $\sfS_t$ Lipschitz by
  \eqref{eq:DS:36})
  and does not depend on the particular
  sequence $u_n$ we used to approximate $u$. Moreover
  $\bar\sfS_t$ is a semigroup and satisfies the estimate
  \eqref{eq:DS:35bis} and the
  $\lambda$-contracting property \eqref{eq:DS:36}; being $D(\FX)$ dense
  in $\Xm$, it is not difficult to combine \eqref{eq:DS:35bis}, \eqref{eq:DS:36} and \eqref{eq:2} to show that
  $\lim_{t\downarrow0}\sfS_t(u)=u$ for every $u\in \Xm$.

  In order to prove that $\bar\sfS$ is still a $\lambda$-flow for
  $\FX$ in $\Xm$ we have to check \eqref{eq:DS:13tris} in $\Xm$:
  we fix $v\in D(\FX)$ and a sequence $v_n\in \Xm_0$
  converging to $v$ with $\FX(v_n)\to\FX(v)$ and we pass to the
  limit as $s\to\infty$
  in the inequalities
  \begin{equation}
    \label{eq:DS:25}
    \frac {e^{\lambda (t_1-t_0)}}2 d^2(\sfS_{t_1}(u_n),v_n)-
    \frac 1
    2 d^2(\sfS_{t_0}(u_n),v_n)
    \le
    \sfE_\lambda(t_1-t_0)\big(\FX(v_n)-\FX(\sfS_{t_1}(u_n)),
  \end{equation}
  using the lower semicontinuity of $\FX$.
\end{proof}

\section{Nonlinear diffusion equations as gradient flows of entropy functionals in $\PP_2(\M)$}
We apply the strategy described in the Section 2 to prove the
geodesic convexity of the integral functional \eqref{entropintro}
in the case of a Riemannian manifold of nonnegative Ricci curvature.
We therefore exhibit a smooth flow (induced by the nonlinear diffusion
equation \eqref{eq0} on the dense subset
$\PP_2^{ar}(\M)$) which satisfies the Evolution Variational Inequality
\eqref{flussograd0}.

Before stating the main theorem of this section let us
recall a fundamental result on this kind of evolution equations,
that can be found in \cite{Vaz1, OttoW}:
\begin{teo}[Classical solutions of nonlinear diffusion equations]\label{classiche}
  Let $\U\in C^\infty(\R^+)$ and $\pr$
  be functions that satisfy the assumptions \eqref{1}
  of Theorem \ref{teo0}. 
  For every $\rhozero\in C^\infty(\M)$ with
  $\rhozero>0$, there exists a unique smooth positive solution
  $\rho\in C^\infty([0,+\infty)\times\Mm)$
  to the Cauchy problem
  \begin{equation}\label{eq1}
    \partial_t\rho_t=\lapB \pr(\rho_t),\qquad
    \rho\restr{t=0}=\lim_{t\downarrow0}\rho_t=\rhozero.
  \end{equation}
  Moreover, given a one parameter family of positive 
  initial data $s\mapsto\rho^s_0\in
  C^{\infty}([0,1]\tildina\times\tildina\M)$,
  the corresponding solutions $\rho^s_t$
  of the equation \eqref{eq1}
  depend smoothly on $s,t$.

  For every $\mu_0=\rhozero\Vol\in \PP_2^{ar}(\M)$
  we denote by $\SS_t(\mu_0)\in\PP_2^{ar}(\M)$ the
  measure $\mu_t=\rho_t\Vol$.
\end{teo}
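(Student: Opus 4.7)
The plan is to use the strict positivity of $\rhozero$ and the compactness of $\M$ to reduce \eqref{eq1} to a uniformly parabolic quasilinear equation, and then to invoke classical Schauder theory (see e.g.\ \cite{Vaz1}). First I would rewrite \eqref{eq1} in divergence form,
$$\partial_t\rho_t=\nabla\cdot\big(\pr'(\rho_t)\,\nabla\rho_t\big),$$
using that $\lapB\pr(\rho)=\nabla\cdot(\pr'(\rho)\nabla\rho)$. The McCann conditions \eqref{1} force $\pr'(\rho)>0$ for every $\rho>0$, so $\pr$ is strictly increasing on $\R^+$. Since $\rhozero\in C^\infty(\M)$ is positive and $\M$ is compact, there exist constants $0<m\le M<\infty$ with $m\le\rhozero\le M$. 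The constants $\rho\equiv m$ and $\rho\equiv M$ are stationary solutions of \eqref{eq1}, and the parabolic comparison principle (available because $\pr'$ is strictly positive on $[m,M]$) then yields the a priori two-sided bound $m\le\rho_t(x)\le M$ throughout the life-span of any classical solution.

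With this bound in force, \eqref{eq1} is uniformly parabolic with smooth coefficients bounded between positive constants. Standard Schauder theory for quasilinear uniformly parabolic equations on a compact Riemannian manifold (Ladyzhenskaya--Solonnikov--Ural'tseva type estimates) gives local-in-time existence and uniqueness of a classical $C^{2+\alpha,1+\alpha/2}$ solution, and the uniform $L^\infty$ bound prevents finite-time blow-up, so the solution extends to $[0,+\infty)\times\M$. Full $C^\infty$ regularity is then obtained by a standard bootstrap: once $\rho$ lies in $C^{2+\alpha,1+\alpha/2}$, the coefficients $\pr^{(k)}(\rho)$ inherit the same H\"older regularity, and differentiating the equation in $(t,x)$ and reapplying Schauder estimates produces two extra derivatives at each step.

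For the smooth dependence on the parameter $s$, I would formally differentiate \eqref{eq1} in $s$: writing $\rho^s_t$ for the solution with initial datum $\rho^s_0$, the function $w^s:=\partial_s\rho^s$ solves the linear parabolic Cauchy problem
$$\partial_t w^s=\lapB\big(\pr'(\rho^s)\,w^s\big),\qquad w^s\restr{t=0}=\partial_s\rho^s_0,$$
which is uniformly parabolic with smooth coefficients depending smoothly on $s$ and with smooth initial data, so it admits a unique smooth solution depending smoothly on $s$; iterating this argument and mixing in $(t,x)$-derivatives then yields arbitrary joint regularity in $(s,t,x)$. The main technical point, which is the substance of the cited references \cite{Vaz1,OttoW}, is the preservation of strict positivity: without it the equation degenerates at $\rho=0$ (as in the porous medium case $\pr(\rho)=\rho^m$ with $m>1$) and one would only obtain weak Barenblatt-type solutions of limited regularity. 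The comparison principle together with the compactness of $\M$ is precisely what sidesteps this degeneracy.
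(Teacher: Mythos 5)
Your proposal should first be measured against the fact that the paper does not prove Theorem \ref{classiche} at all: it is quoted as a known result and attributed to \cite{Vaz1,OttoW}. So you are supplying an argument where the authors supply a citation. The route you sketch --- divergence form $\partial_t\rho=\nabla\cdot(\pr'(\rho)\nabla\rho)$, comparison with the constant solutions $m=\min\rhozero$ and $M=\max\rhozero$ to get a two-sided a priori bound, uniform parabolicity on $[m,M]$, local existence and uniqueness by quasilinear Schauder theory, continuation thanks to the $L^\infty$ bound, bootstrap to $C^\infty$, and linearization for the parameter dependence --- is indeed the standard proof behind the quoted result, and for the functionals the paper actually works with (the logarithmic entropy and the powers in \eqref{ex}) it goes through.

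There is, however, one step that is not correct as stated, and it is the linchpin of the reduction to a uniformly parabolic problem: \eqref{1} does \emph{not} force $\pr'(\rho)>0$ for every $\rho>0$. The second McCann condition only gives $\rho\,\pr'(\rho)\ge(1-\tfrac1n)\pr(\rho)\ge0$, hence $\pr'\ge0$ everywhere and $\pr'>0$ only where $\pr>0$; since $\rho\mapsto\pr(\rho)\rho^{-(1-1/n)}$ is nondecreasing, $\pr$ may vanish identically on an interval $(0,\bar\rho]$ and be positive beyond it. A concrete admissible example is $\pr(\rho)=\rho^{1-1/n}e^{-1/(\rho-1)}$ for $\rho>1$ and $\pr\equiv0$ on $(0,1]$, which satisfies \eqref{1} and comes from a smooth convex $\U$ (recall $\U''(\rho)=\pr'(\rho)/\rho$). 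For such a $\pr$ and initial data whose range meets $(0,\bar\rho]$ the equation \eqref{eq1} is genuinely degenerate on part of $[m,M]$, uniform parabolicity fails, and the Schauder/bootstrap scheme (as well as the comparison principle as you justify it) collapses; global smoothness is then a delicate matter and not covered by your argument. You should therefore either assume the nondegeneracy $\pr'>0$ on $(0,+\infty)$ explicitly (which is what the cited references effectively use and what the paper's examples satisfy), or restrict the claim to that case. A smaller point: the smooth dependence on $s$ is only treated formally; to conclude, you must justify that $\partial_s\rho^s$ exists and solves the linearized equation, e.g.\ by uniform parabolic estimates on difference quotients or an implicit-function-theorem argument in parabolic H\"older spaces --- routine under nondegeneracy, but it is precisely this step that yields the joint smoothness in $(s,t,x)$ asserted in the statement and used later in Lemma \ref{derivazione}.
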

The main result that we show in this section is the following:
\vskip 0.2 cm
\begin{teo}\label{teorema 1}
Let $\U\in C^\infty(\R^+)$ and $\pr$ be functions that satisfy the assumptions \eqref{1} of Theorem \ref{teo0} and let us suppose that
\begin{equation}
\mathrm{Ric}(x)\geq0\quad\forall\,x\in\M.\label{Ric1}
\end{equation}
The semigroup $\SS$ induced by \eqref{eq1} in $\PP_2^{ar}(\M)$
is a $0$-flow in $\PP_2^{ar}(\M)$ for the functional
\begin{equation}\label{entropia62}
\FF(\mu)=\int\nolimits_{\M}\U(\rho)\,\d\Vol, \quad \forall\, \mu=\rho\Vol\in \PP_2^{ar}(\M).
\end{equation}
In particular, for every $\mu_0=\rho_0\Vol,\nu\in \PP_2^{ar}(\M)$,
the measures $\mu_t=\SS_t(\mu_0)=\rho_t\Vol\in \PP_2^{ar}(\M)$ solving
\eqref{eq1}
satisfy the E.V.I.
\begin{equation}\label{derivata distanza bis}
  \frac{1}{2}\frac{\d^+}{\d t} W_2^2(\nu,\mu_t)\leq\FF(\nu)-\FF(\mu_t)
  \quad\forall\,t\in[0,+\infty).
\end{equation}
\end{teo}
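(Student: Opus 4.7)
}
The strategy is to verify the E.V.I.\ \eqref{derivata distanza bis} by implementing, in the Wasserstein setting, the abstract scheme of Section 2: we perturb a smooth curve connecting $\nu$ to $\mu_0$ by applying the nonlinear-diffusion semigroup $\SS$ parameterized by $st$, and establish a pointwise action-differential estimate analogous to \eqref{eq:DS:26} with $\lambda=0$.

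\emph{Step 1 (approximate Benamou--Brenier curve).} By the representation \eqref{riemann0}, for every $\eps>0$ choose $(\rho^s,\phi^s)\in\CC(\nu,\mu_0)$ such that the initial action satisfies $\int_0^1\!\int_\M |\nabla\phi^s|^2 \rho^s\,\d\Vol\,\d s\le W_2^2(\nu,\mu_0)+\eps^2$. By Theorem \ref{classiche} applied to the one-parameter smooth family $\rho^s$, the densities $\tilde\rho^s_t$ of $\SS_{st}(\rho^s\Vol)$ form a smooth positive family solving $\partial_t\tilde\rho^s_t=s\,\lapB \pr(\tilde\rho^s_t)$.

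\emph{Step 2 (induced velocity potential).} Because $\tilde\rho^s_t$ is smooth and strictly positive and $\M$ is compact without boundary, there is a unique (up to an $s,t$-constant) smooth potential $\tilde\phi^s_t$ solving the elliptic continuity equation $\partial_s\tilde\rho^s_t+\nabla\cdot(\tilde\rho^s_t\nabla\tilde\phi^s_t)=0$, with $\tilde\phi^s_0=\phi^s$. Define
\begin{equation*}
  \tilde A^s_t:=\int_\M |\nabla\tilde\phi^s_t|^2\,\tilde\rho^s_t\,\d\Vol,
  \qquad
  \tilde \FF^s_t:=\int_\M \U(\tilde\rho^s_t)\,\d\Vol,
  \qquad
  \tilde{\mathscr A}_t:=\int_0^1 \tilde A^s_t\,\d s.
\end{equation*}

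\emph{Step 3 (core action-differential inequality).} The heart of the argument is the pointwise estimate
\begin{equation}\label{eq:core-plan}
  \tfrac12\,\partial_t\tilde A^s_t+\partial_s\tilde\FF^s_t\le 0,
  \qquad s\in[0,1],\ t\ge 0.
\end{equation}
To prove \eqref{eq:core-plan} I would differentiate both terms under the integral, use the two evolutions for $\tilde\rho^s_t$ (in $t$: nonlinear diffusion; in $s$: continuity equation with potential $\tilde\phi^s_t$), and integrate by parts so that the remainder can be written using the Bochner identity
\begin{equation*}
  \tfrac12\lapB|\nabla\phi|^2=|\nabla^2\phi|_\sfg^2+\la\nabla\lapB\phi,\nabla\phi\ra_\sfg+\mathrm{Ric}(\nabla\phi,\nabla\phi).
\end{equation*}
The resulting integrand, weighted by $\pr(\tilde\rho^s_t)$, combines a quadratic form in $\nabla^2\tilde\phi^s_t$ with a Ricci term; the McCann condition $\rho \pr'(\rho)\ge(1-1/n)\pr(\rho)$ is exactly what makes the trace part absorb $(\lapB\tilde\phi)^2/n\le|\nabla^2\tilde\phi|_\sfg^2$, and the hypothesis $\mathrm{Ric}\ge 0$ kills the remaining curvature contribution.

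\emph{Step 4 (from \eqref{eq:core-plan} to the E.V.I.).} Integrating \eqref{eq:core-plan} in $s\in[0,1]$ and noting that $\tilde\FF^0_t=\FF(\nu)$, $\tilde\FF^1_t=\FF(\mu_t)$, gives $\tfrac12\tfrac{\d}{\d t}\tilde{\mathscr A}_t\le\FF(\nu)-\FF(\mu_t)$. A time integration then yields
\begin{equation*}
  \tilde{\mathscr A}_t\le \tilde{\mathscr A}_0+2\!\int_0^t\!\!\big(\FF(\nu)-\FF(\mu_r)\big)\d r
  \le W_2^2(\nu,\mu_0)+\eps^2+2\!\int_0^t\!\!\big(\FF(\nu)-\FF(\mu_r)\big)\d r.
\end{equation*}
Since by \eqref{riemann0} applied to the curve $s\mapsto \tilde\rho^s_t\Vol$ one has $W_2^2(\nu,\mu_t)\le \tilde{\mathscr A}_t$, letting $\eps\downarrow 0$, dividing by $t$, and taking $\limsup_{t\downarrow 0}$ together with the lower semicontinuity of $\FF$ produces the E.V.I.\ at $t=0$; the semigroup property then extends it to every $t\ge 0$. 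This checks property (\ref{eq:1}c), while (\ref{eq:1}a,b) are immediate from Theorem \ref{classiche} and the classical entropy dissipation identity.

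\emph{Main obstacle.} The delicate and non-trivial point is Step 3: the precise bookkeeping of the integrations by parts needed to rewrite $\tfrac12\partial_t\tilde A^s_t+\partial_s\tilde\FF^s_t$ in a manifestly Bochner-ready form, and the matching of the McCann condition with the trace inequality for $\nabla^2\tilde\phi$. Step 2 also contains a minor technical issue, namely verifying the smooth dependence of $\tilde\phi^s_t$ on $(s,t)$ via elliptic regularity for $\nabla\cdot(\tilde\rho^s_t\nabla\,\cdot\,)$; this is a routine but essential prerequisite for the pointwise calculus in Step 3.
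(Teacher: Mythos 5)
Your proposal is correct and follows essentially the same route as the paper: an $\eps$-optimal Benamou--Brenier curve, the perturbation $\tilde\rho^s_t$ obtained by applying $\SS_{st}$, the pointwise action-differential inequality $\tfrac12\partial_t\tilde A^s_t+\partial_s\tilde\FF^s_t\le 0$ proved via integration by parts, the Bochner identity, the trace inequality $(\lapB\phi)^2\le n|\mathrm{Hess}\,\phi|_\sfg^2$ together with the McCann condition, and $\mathrm{Ric}\ge0$, followed by integration in $s$ and $t$ and comparison with $W_2^2(\nu,\mu_t)\le\tilde{\mathscr A}_t$ to reach the E.V.I. The computation you defer in Step 3 is exactly the content of the paper's Lemma \ref{derivazione}, and your identification of where each hypothesis enters matches it.
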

In order to prove Theorem \ref{teorema 1}, 
thanks to the ``Riemannian-like'' characterization
of the Wasserstein distance provided by \eqref{riemann0},
we can follow the
strategy
presented in Section 2, in particular we want to prove the
differential inequality of Theorem \ref{thm:main_smooth}.
Following \textsc{Otto's} formalism, we collect in the next table
the formal correspondences between the various objects:
\begin{center}
  \begin{tabular}[c]{c|c}
    $\Xm$, Riemannian manifold, with distance $d$&
    $\PP_2^{ar}(\M)$ with distance $W_2$\vspace{2pt}\cr
    \hline
    a smooth curve $\gamma^s$ 
    in $\Mm$&
    a smooth family $\mu^s=\rho^s\Vol\in \PP_2^{ar}(\M)$
    \\
    the tangent vector
    $\partial_s\gamma^s$ in $T_{\gamma^s}\Mm$&
    the vector field $\nabla \phi^s$ where
    $-\nabla\cdot(\rho^s\nabla\phi^s)=\frac\partial{\partial s} \rho^s$\\
    $\big|\partial_s\gamma^s\big|_g^2$&
    $\displaystyle
    \int\nolimits_\M \big|\nabla\phi^s(x)\big|_\sfg^2\,
    \rho^s(x)\,\d\Vol(x)$
    \\ 
    $\gamma^s_t:=\sfS_t(\gamma^s)$, 
    $\tilde\gamma^s_t:=\gamma^s_{st}=\sfS_{st}(\gamma^s)$ &
    $\mu^s_t=\rho^s_t\,\Vol:=\SS_t(\mu^s)$,
    $\tilde\mu^s_t=
    \tilde\rho^s_t\,\Vol:=\mu^s_{st}=\SS_{st}(\mu^s)$\\
    $\tilde A^s_t=\big|\partial_s \tilde \gamma^s_t\big|_g^2$&
    $\displaystyle
    \int\nolimits_\M  \big|\nabla\tilde\phi^s_t(x)\big|_\sfg^2\,
    \tilde\rho^s_t(x)\,\d\Vol(x)$\\
    $F(\gamma^s)$&
    $\displaystyle\FF(\mu^s)=\int\nolimits_{\M} \U(\rho^s)\,\d\Vol$\\
    $\big(\partial_\theta\sfS_\theta \gamma^s\big)\restr{\theta=0}=
    -\nabla F(\gamma^s)$&
    $-\nabla \pr(\rho^s)/\rho^s=-\nabla \U'(\rho^s)$.
  \end{tabular}
\end{center}
The core of the proof of Theorem \ref{teorema 1}
lies in the following lemma:
\begin{lemma}\label{derivazione}
  Let $\mu^s=\rho^s\Vol$, $s\in [0,1]$, be a smooth family of measures
  in $ \PP_2^{ar}(\M)$ and let $\tilde\mu^s_t=\tilde\rho^s_t\Vol
  =\SS_{st}(\mu^s)$
  be obtained by flowing $\rho^s$ along the flow \eqref{eq1},
  i.e.\ $\tilde\rho^s_t=\rho^s_{st}$ where
  $\rho^s_t$
  satisfies
  \begin{equation}\label{ip2}
    \frac\partial{\partial t}\rho^s_t-\lapB \pr(\rho^s_{t})=0\text{ in }\M,\quad
    \forall\,s\in[0,1],\ t>0;\qquad
    \rho^s_{t=0}=\rho^s.
  \end{equation}
  Let $\tilde\phi^s_t\in
  C^{\infty}([0,1]\times[0,+\infty)\times\M)$ be
  the functions defined by the equation 
  \begin{equation}
    \label{eq:DS:29c}
    -\nabla\cdot(\tilde\rho^s_t\nabla\tilde\phi^s_t)=\partial_s\tilde\rho^s_t
    \quad\text{ in }\M,\quad
    \int_\M \tilde\phi^s_t(x)\,\d\Vol(x)=0
    \quad\forall\,s\in [0,1],\ t\in[0,+\infty),
  \end{equation}
  and let us set
  \begin{equation}
    \label{eq:DS:30b}
    \begin{aligned}
      \tilde A^s_t:=&
      \int_{\M}|\nabla\tilde\phi^s_t(x)|_{\sfg}^2\,\tilde\rho^s_t(x)\,
      \d\Vol(x),\\
      \tilde D^s_t:=&
      -\int_{\M}\bigg[\Big(|\mathrm{Hess}\,\tilde\phi^s_t|^2_{\sfg}+ 
      \mathrm{Ric}\,(\nabla\tilde\phi^s_t,\nabla\tilde\phi^s_t)\Big)\,\pr\,(\tilde\rho^s_t)
      +(\lapB\tilde\phi^s_t)^2\,
      \big(\tilde\rho^s_t \pr'(\tilde\rho^s_t)-\pr\,(\tilde\rho^s_t)\big)\bigg]
      \d\Vol.
    \end{aligned}
  \end{equation}
Then, we have the formula
\begin{align}\label{derivata azione}
  \frac{\partial}{\partial t}\frac 12\tilde A^s_t+
  \frac{\partial}{\partial s}\FF(\tilde\rho^s_t \Vol)
  =s\tilde D^s_t,
  \quad\forall\,t\in[0,+\infty),\,\forall\,s\in[0,1].
\end{align}
In particular, if $\M$ has nonnegative Ricci curvature, then $\tilde
D^s_t\le0$
and therefore
\begin{equation}
  \label{eq:DS:31a}
  \frac{\partial}{\partial t}\frac 12\tilde A^s_t+
  \frac{\partial}{\partial s}\FF(\tilde\rho^s_t \Vol)\le0.
\end{equation}
\end{lemma}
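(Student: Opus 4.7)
}
The strategy is the direct Eulerian calculation suggested by the correspondence table preceding the lemma: rewrite everything in terms of the two evolution equations governing $(\tilde\rho^s_t,\tilde\phi^s_t)$ and reduce the identity to the Bochner formula. The two fundamental relations are the rescaled diffusion equation
\begin{equation*}
  \partial_t\tilde\rho^s_t=s\,\lapB \pr(\tilde\rho^s_t),
\end{equation*}
which follows from $\tilde\rho^s_t=\rho^s_{st}$ and the chain rule applied to \eqref{ip2}, and the elliptic relation $-\nabla\cdot(\tilde\rho^s_t\nabla\tilde\phi^s_t)=\partial_s\tilde\rho^s_t$ from \eqref{eq:DS:29c}. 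I will also use the algebraic identity $\nabla\pr(\rho)=\rho\,\nabla\U'(\rho)$, an immediate consequence of $\pr(\rho)=\rho\U'(\rho)-\U(\rho)+\U(0_+)$.

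For the $s$-derivative of the energy, integration by parts and the above algebraic identity give
\begin{equation*}
  \frac{\partial}{\partial s}\FF(\tilde\rho^s_t\Vol)
  =\int_\M \U'(\tilde\rho^s_t)\,\partial_s\tilde\rho^s_t\,\d\Vol
  =\int_\M \nabla\pr(\tilde\rho^s_t)\cdot\nabla\tilde\phi^s_t\,\d\Vol
  =-\int_\M \pr(\tilde\rho^s_t)\,\lapB\tilde\phi^s_t\,\d\Vol .
\end{equation*}
For the $t$-derivative of the action I drop tildes for brevity: write $\partial_t\tilde A^s_t=2\int\rho\,\nabla\phi\cdot\nabla\partial_t\phi\,\d\Vol+\int|\nabla\phi|_\sfg^2\,\partial_t\rho\,\d\Vol$. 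The delicate term $\int\rho\,\nabla\phi\cdot\nabla\partial_t\phi$ is handled by testing against $\phi$ the equation obtained by $\partial_t$-differentiating $-\nabla\cdot(\rho\nabla\phi)=\partial_s\rho$, using commutativity of $\partial_s$ and $\partial_t$, and applying $\partial_s\partial_t\rho=\lapB\pr(\rho)+s\lapB(\pr'(\rho)\partial_s\rho)$. After integrating by parts this produces one term $\int\pr(\rho)\lapB\phi$ that is precisely $-\partial_s\FF$, together with two $s$-weighted terms. The fortunate cancellation yields
\begin{equation*}
  \frac12\frac{\partial}{\partial t}\tilde A^s_t+\frac{\partial}{\partial s}\FF(\tilde\rho^s_t\Vol)
  =s\left[\int_\M \lapB\tilde\phi^s_t\,\pr'(\tilde\rho^s_t)\,\partial_s\tilde\rho^s_t\,\d\Vol
  -\frac12\int_\M |\nabla\tilde\phi^s_t|_\sfg^2\,\lapB\pr(\tilde\rho^s_t)\,\d\Vol\right].
\end{equation*}

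The final step matches this bracket with $\tilde D^s_t$. Expand $\pr'(\rho)\partial_s\rho=-\rho\pr'(\rho)\lapB\phi-\nabla\pr(\rho)\cdot\nabla\phi$ to recover the $(\lapB\phi)^2\rho\pr'(\rho)$ contribution, and transfer the Laplacian in the second term via $\int|\nabla\phi|_\sfg^2\lapB\pr(\rho)=\int\pr(\rho)\lapB|\nabla\phi|_\sfg^2$. Applying Bochner's formula $\tfrac12\lapB|\nabla\phi|_\sfg^2=|\mathrm{Hess}\,\phi|_\sfg^2+\nabla\phi\cdot\nabla\lapB\phi+\mathrm{Ric}(\nabla\phi,\nabla\phi)$, and combining the two cross terms through the IBP identity $\int\nabla\phi\cdot\nabla(\pr(\rho)\lapB\phi)=-\int\pr(\rho)(\lapB\phi)^2$, reorganizes the remainder into exactly the expression defining $\tilde D^s_t$ in \eqref{eq:DS:30b}. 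This proves \eqref{derivata azione}. For \eqref{eq:DS:31a}, under $\mathrm{Ric}\ge0$ and the \textsc{McCann} conditions \eqref{1}, the elementary Cauchy-Schwarz bound $(\lapB\phi)^2\le n|\mathrm{Hess}\,\phi|_\sfg^2$ gives $|\mathrm{Hess}\,\phi|_\sfg^2\pr(\rho)+(\lapB\phi)^2(\rho\pr'(\rho)-\pr(\rho))\ge(\lapB\phi)^2(\rho\pr'(\rho)-(1-\tfrac1n)\pr(\rho))\ge0$, and $\mathrm{Ric}(\nabla\phi,\nabla\phi)\pr(\rho)\ge0$, so $\tilde D^s_t\le0$.

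The main obstacle is bookkeeping: one must differentiate both PDEs, invoke commutativity of $\partial_s$ and $\partial_t$, and recognize that the $s$-independent term produced by $\partial_s\partial_t\rho$ is exactly what cancels $\partial_s\FF$. The remaining manipulations are routine integrations by parts, and the Bochner formula is what brings in the Ricci contribution that controls the sign when $\mathrm{Ric}\ge0$.
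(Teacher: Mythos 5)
Your proposal is correct and follows essentially the same route as the paper's proof: the rescaled equation $\partial_t\tilde\rho^s_t=s\lapB\pr(\tilde\rho^s_t)$, the $t$-differentiated elliptic relation tested against $\tilde\phi^s_t$ together with equality of mixed partials, the identification $\partial_s\FF=\int_\M\nabla\pr(\tilde\rho^s_t)\cdot\nabla\tilde\phi^s_t\,\d\Vol$ via $\nabla\pr(\rho)=\rho\nabla\U'(\rho)$, and finally Bochner's formula plus $(\lapB\phi)^2\le n|\mathrm{Hess}\,\phi|_\sfg^2$ and the McCann conditions for the sign of $\tilde D^s_t$. The only difference is organizational (you isolate an intermediate $s$-weighted bracket before invoking Bochner, while the paper runs one continuous chain of identities), and your intermediate identity checks out.
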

\begin{proof}
Being $\tilde\rho^s_t:=\rho^\sigma_{\tau}\restr{\sigma=s,\tau=st}$
we get
\begin{align}
  \label{eq:DS:30c}
  \tfrac\partial{\partial s}\tilde\rho^s_t&
  =\Big(\tfrac \partial{\partial \sigma}
  \rho^{\sigma}_{\tau}+t\tfrac\partial{\partial\tau}\rho^\sigma_{\tau}
  \Big)_{\sigma=s,\tau=st},\qquad
  \tfrac\partial{\partial t}\tilde \rho^s_t
  =s\partial_\tau\rho^s_{\tau}\restr{\tau=st}=s\lapB \pr\,(\tilde\rho^s_t),\\
  \tfrac{\partial^2}{\partial t\,\partial s}\tilde\rho^s_t
  &\topref{eq:DS:29c}
  =-\nabla\cdot(\tfrac\partial{\partial t}\tilde\rho^s_t\,
  \nabla\tilde\phi^s_t)-
  \nabla\cdot(\tilde\rho^s_t\,
  \tfrac\partial{\partial t}\nabla\tilde\phi^s_t),\label{2.45}
  \\
  \label{2.46}
  \tfrac{\partial^2}{\partial s\,\partial t}\tilde\rho^s_t&
  \topref{eq:DS:30c}=
  s\lapB\Big(
  \pr'(\tilde\rho^s_t)\,\tfrac{\partial}{\partial s}\tilde\rho^s_t\Big)
  +\lapB \,\pr\,(\tilde\rho^s_t)
  \topref{eq:DS:29c}=
  -s\lapB\Big(\pr'(\tilde\rho^s_t)\,\nabla\cdot(\tilde\rho^s_t\nabla\tilde\phi^s_t)
  \Big)+\lapB 
\pr\,(\tilde\rho^s_t).
\end{align}
Differentiation and integration by parts yield
\begin{align}\label{calcolo}
  \frac{\partial}{\partial
    t}&\int\nolimits_{\M}\frac{1}{2}|\nabla\tilde\phi^s_t|_\sfg^2\,
  \tilde\rho^s_t\,
  \d\Vol =
  \int\nolimits_{\M}
  \la\tfrac\partial{\partial t}\nabla\tilde\phi^s_t,
  \nabla\tilde\phi^s_t\ra_\sfg\,\tilde\rho^s_t\,\d\Vol
  +
  \tfrac{1}{2}\int\nolimits_{\M}|\nabla\tilde\phi^s_t|_\sfg^2\,
  \tfrac{\partial}{\partial t}\tilde\rho^s_t\, \d\Vol =\notag\\
  &=-\int\nolimits_{\M}
  \nabla\cdot(\tilde\rho^s_t\tfrac\partial{\partial t}\nabla\tilde\phi^s_t)\,\tilde\phi^s_t\,
  \d\Vol
  \topref{eq:DS:30c}+
  \frac{1}{2}s\int\nolimits_{\M}
  \lapB(|\nabla\tilde\phi^s_t|_\sfg^2)\,\pr(\tilde\rho^s_t)\, \d\Vol =\notag\\
  &\overset{\eqref{2.45}}{=}
  \int\nolimits_{\M}\tfrac {\partial^2}{\partial t\partial s}
  \tilde\rho^s_t \, \tilde\phi^s_t\,\d\Vol
  +
  \int\nolimits_{\M}\Big(
  \nabla
  \cdot(\tfrac\partial{\partial t}\tilde\rho^s_t\nabla\tilde\phi^s_t)
  \Big)\,\tilde\phi^s_t\,\d\Vol
  +
  \frac{1}{2}s\int\nolimits_{\M}\lapB(|\nabla\tilde\phi^s_t|_\sfg^2)\,
  \pr(\tilde\rho^s_t)\,
  \d\Vol =\notag\\
  &\overset{\eqref{2.46}}{=}
  \int\nolimits_{\M}\Big(
  \lapB \pr\,(\tilde\rho^s_t)-
  s
  \lapB\Big(\pr'(\tilde\rho^s_t)\,\nabla\cdot(\tilde\rho^s_t\nabla\tilde\phi^s_t)
  \Big)\,
  \tilde\phi^s_t\, \d\Vol
  \\&\qquad
  -s\int\nolimits_{\M}\lapB
  \pr(\tilde\rho^s_t)\,|\nabla\tilde\phi^s_t|_\sfg^2\, \d\Vol
  +\frac s2\int\nolimits_{\M}
  \lapB\Big(|\nabla\tilde\phi^s_t|_\sfg^2\Big)\,\pr(\tilde\rho^s_t)\,
  \d\Vol =\notag\\
  &=
  \int\nolimits_{\M}\pr(\tilde\rho^s_t)\,\lapB\tilde\phi^s_t\,
  \d\Vol -
  s\int\nolimits_{\M}\Big(
  \big\la\nabla  \pr(\tilde\rho^s_t),\nabla\tilde\phi^s_t\big\ra_\sfg\,
  \lapB \tilde\phi^s_t
  +
  \tilde\rho^s_t\,\pr'(\tilde\rho^s_t)\big(\lapB
  \tilde\phi^s_t\big)^2
  \Big)\,\d\Vol\notag\\
  &\qquad
  -\frac s2
  \int\nolimits_{\M}
  \lapB\big(|\nabla\tilde\phi^s_t|_\sfg^2\big)\, \pr(\tilde\rho^s_t)\,\d\Vol
  \notag\\
 &=
 -\int\nolimits_{\M} \big\la\nabla
  \pr\,(\tilde\rho^s_t),\nabla\tilde\phi^s_t\big\ra_\sfg \,
  \d\Vol +
  s\int\nolimits_{\M}\biggl[-\frac{1}{2}\lapB(|\nabla\tilde\phi^s_t|_\sfg^2)+
  \la\nabla\tilde\phi^s_t,\nabla\lapB\tilde\phi^s_t\ra_\sfg\biggr]
  \pr\,(\tilde\rho^s_t)\,\d\Vol +\notag\\
  &\qquad+s\int\nolimits_{\M}\big(\lapB\tilde\phi^s_t\big)^2
  \,\Big(\pr\,(\tilde\rho^s_t)-\tilde\rho^s_t
\pr'(\tilde\rho^s_t)\Big)\,\d\Vol  
\end{align}
Applying Bochner formula:
\begin{equation}
\la\nabla\phi,\nabla\lapB\phi\ra_\sfg -
\tfrac{1}{2}\lapB\big(|\nabla\phi|_\sfg^2\big)=-|\mathrm{Hess}\,\phi|_\sfg^2 -
\mathrm{Ric}(\nabla\phi,\nabla\phi),
\end{equation}
we get 
\begin{equation}\label{eq121}
  \frac{\partial}{\partial t}
  \frac{1}{2}
  \int_{\M}|\nabla\tilde\phi^s_t|_\sfg^2\,\tilde\rho^s_t\,
  \d\Vol +\int_{\M}\big\la\nabla
  \pr(\tilde\rho^s_t),\nabla\tilde\phi^s_t\big\ra_\sfg \,
  \d\Vol =s\tilde D^s_t.
\end{equation}
Now we observe that the second term in
the right-hand side of \eqref{eq121} is the derivative of the functional
\eqref{entropia62} along the curve $s\mapsto\tilde\rho^s_tV\in\PP_2^{ar}(\M)$:
\begin{equation}
  \frac{\partial}{\partial s}\FF(\tilde\mu^s_t)
  =\int_{\M}\U'(\tilde\rho^s_t)
  \,\tfrac\partial{\partial s}\tilde\rho^s_t\,\d\Vol =-\int_{\M}\U'(\tilde\rho^s_t)\nabla\cdot(\tilde\rho^s_t\nabla\tilde\phi^s_t)\,\d\Vol 
=\int_{\M}\nabla \pr\,(\tilde\rho^s_t)\cdot\nabla\tilde\phi^s_t\,\d\Vol
\end{equation}
and we eventually obtain \eqref{derivata azione}.

Finally, when $\textrm{Ric}(\M)\ge0$, using the inequality
$(\lapB\phi)^2\leq n|\mathrm{Hess}\,\phi|_\sfg^2$ and \eqref{1} we
easily get $\tilde D^s_t\le 0$ and \eqref{eq:DS:31a}.
\end{proof}
\emph{Proof of Theorem \ref{teorema 1}.}
We argue as in the proof of Theorem \ref{thm:main_smooth}:
we fix $\varepsilon>0$
and
we choose a smooth curve $(\rho,\phi)\in \CC(\nu,\mu)$
such that 
\begin{equation}\label{epsilon-geodetica}
  \int\nolimits_0^1 \tilde A^s_0\,\d s=
  \int\nolimits_0^1\int\nolimits_{\M}
  |\nabla\phi^s|_\sfg^2\, \rho^s\,\d\Vol \d s
  \leq W_2^2(\nu,\mu)
  +\varepsilon.
\end{equation}
Let $(\tilde\rho,\tilde\phi)$ a smooth variation defined 
as in Lemma \ref{derivazione}; since
$\tilde\rho^0_t\Vol=\rho^0\Vol=\nu$ and $\tilde\rho^1_t\Vol=\mu_t$,
for every $t>0$ we have $(\tilde\rho^s_t,\tilde\phi^s_t)\in \CC(\nu,\mu_t)$
and therefore
\begin{equation}
  \label{eq:DS:29d}
  W^2_2(\nu,\mu_t)\le \int\nolimits_0^1 \int\nolimits_\M
  |\nabla\tilde\phi^s_t|_\sfg^2 \,\tilde\rho^s_t\,\d\Vol\, \d s=
  \int\nolimits_0^1 \tilde
  A^s_t\,\d s.
\end{equation}
Integrating \eqref{eq:DS:31a} for $s\in [0,1]$ and $t\in [0,\tau]$ and
recalling that $t\mapsto \FF(\mu_t)$ is not increasing, we
get
\begin{equation}
  \label{eq:DS:30d}
  \frac 12 \int\nolimits_0^1 \tilde A^s_\tau\,\d s-
  \frac 12 \int\nolimits_0^1 \tilde A^s_0\,\d s\le 
  \tau\Big(\FF(\nu)-\FF(\mu_\tau)\Big).
\end{equation}
Combining \eqref{eq:DS:30d} with \eqref{eq:DS:29d}
and \eqref{epsilon-geodetica} we get
\begin{equation}\label{0.10}
\frac{1}{2}W_2^2(\nu,\mu_\tau) - \frac{1}{2}
W_2^2(\nu,\mu)
\le
 \tau\Big(\FF(\nu)-\FF(\mu_\tau)\Big)+\varepsilon,
\end{equation}
and, as $\varepsilon$ is arbitrary,
\begin{equation}\label{der1}
\frac{1}{2}W_2^2(\nu,\mu_\tau) - \frac{1}{2}
W_2^2(\nu,\mu)
\le
 \tau\Big(\FF(\nu)-\FF(\mu_\tau)\Big).
\end{equation}
Since the semigroup associated to \eqref{eq1} is translation
invariant, \eqref{der1} is the integral formulation
\eqref{eq:DS:13tris}
of \eqref{derivata distanza bis}.\quad$\Box$
\begin{remark}
  \label{rem:Otto}
  \upshape
  Taking into account Theorem \ref{thm:main_reduction},
  \eqref{derivata azione} perfectly fits with the calculation
  performed by
  \cite[Lemma 4.4]{OttoW}, which 
  provides the same expression
  for $\tilde D^s_t$.
\end{remark}
Applying now
Theorem \ref{thm:extension}, with the choices $\Xm:=\PP_2(\M)$,
$\Xm_0:=\PP_2^{ar}(\M)$, $\FX:=\FF$ (which satisfies
the approximation condition \eqref{eq:2}, see \cite{Ambrosio-Buttazzo88})
we can prove the first part
of Theorem \ref{teo0}. 
\begin{corollario}\label{teorema 2}
  Let $\FF:\PP_2(\M)\freccia(-\infty,+\infty]$ be the
  functional defined in \eqref{entropintro}.
  If $\U$ satisfies \textsc{McCann} conditions
  \eqref{1} and
  $\textrm{Ric}(\M)\ge0$, then
  $\FF$ is (strongly) displacement convex along \emph{every} geodesic
  $\mu:s\in [0,1]\mapsto \mu^s\in \PP_2(\M)$, i.e.
  \begin{equation}\label{convessita geodetica}
    \FF(\mu^s)\leq(1-s)\FF(\mu^0)+s\FF(\mu^1)\quad\forall\,s\in[0,1].
\end{equation}
\end{corollario}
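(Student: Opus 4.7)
The plan is to deduce the corollary as a direct combination of Theorem \ref{teorema 1} with the abstract extension result Theorem \ref{thm:extension}. Theorem \ref{teorema 1} furnishes a $0$-flow $\SS$ for $\FF$ on the subset $\Xm_0:=\PP_2^{ar}(\M)$, which is dense in $\Xm:=\PP_2(\M)$. Since $\M$ is compact, $\PP_2(\M)$ is itself compact (weak topology coinciding with the $W_2$-topology) and in particular complete, so the completeness hypothesis of Theorem \ref{thm:extension} holds.

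The only point to verify is the approximation property \eqref{eq:2}, namely that for every $\mu\in\PP_2(\M)$ there exists $\mu_n=\rho_n\Vol\in\PP_2^{ar}(\M)$ with $W_2(\mu_n,\mu)\to 0$ and $\FF(\mu_n)\to\FF(\mu)$. The natural candidates are obtained by convolving $\mu$ with a positive smooth kernel (e.g.\ the heat kernel on $\M$ at a small time $t_n\downarrow 0$), which produces strictly positive smooth densities $\rho_n\in C^\infty(\M)$. Weak convergence $\mu_n\rightharpoonup\mu$ together with compactness of $\M$ yields $W_2(\mu_n,\mu)\to 0$. For the convergence of $\FF$, one combines lower semicontinuity of $\FF$ (which gives $\liminf\FF(\mu_n)\ge\FF(\mu)$) with the reverse inequality $\limsup\FF(\mu_n)\le\FF(\mu)$, the latter obtained via Jensen's inequality applied to the convex integrand $\U$ together with the identity $\U'(\infty)=\lim_{\rho\to\infty}\U(\rho)/\rho$ to handle the singular part $\U'(\infty)\mu^\perp(\M)$. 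This is exactly the classical argument of Ambrosio--Buttazzo \cite{Ambrosio-Buttazzo88} adapted to the Riemannian setting.

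Once \eqref{eq:2} is in place, Theorem \ref{thm:extension} extends $\SS$ uniquely to a $0$-flow $\bar\SS$ on all of $\PP_2(\M)$ and asserts directly that $\FF$ is strongly geodesically $0$-convex in $\PP_2(\M)$, which is precisely \eqref{convessita geodetica}.

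The only substantive obstacle is the verification of the approximation condition for measures with a nontrivial singular part: the smooth approximation must respect the asymptotic slope $\U'(\infty)$ in order to produce the correct limit of $\FF$. For measures absolutely continuous with respect to $\Vol$ the argument is immediate by standard mollification and Jensen; the singular part requires slightly more care but is handled by the convexity of $\U$ combined with McCann's conditions \eqref{1}, and may be quoted from \cite{Ambrosio-Buttazzo88}. Beyond this, the proof is essentially a one-line invocation of Theorem \ref{thm:extension}.
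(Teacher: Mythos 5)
Your proposal is correct and follows essentially the same route as the paper: the authors likewise deduce the corollary by applying Theorem \ref{thm:extension} with $\Xm:=\PP_2(\M)$, $\Xm_0:=\PP_2^{ar}(\M)$, $\FX:=\FF$, using the $0$-flow provided by Theorem \ref{teorema 1} and quoting \cite{Ambrosio-Buttazzo88} for the approximation property \eqref{eq:2}. Your additional sketch of how the approximation (heat-kernel mollification, lower semicontinuity plus the Jensen/recession-slope bound for the singular part) could be carried out is consistent with that citation.
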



\section{The Heat equation and the displacement $\lambda$-convexity of
  the logarithmic Entropy}
In this last section we prove the second part of Theorem \ref{teo0}:
we thus assume that the Riemannian manifold $\M$ satisfies the
lower Ricci curvature bound
\begin{equation}
  \label{Ric1lambda}
  \mathrm{Ric}(\M)\ge \lambda\quad\text{i.e.}\quad
  \mathrm{Ric}_x(\xi,\xi)\ge \lambda\,|\xi|_\sfg^2\quad
  \forall\, \xi\in T_x\,\M,
\end{equation}
and we consider the logarithmic entropy functional
\begin{equation}\label{entropiab}
\FF(\mu)=\int\nolimits_{\M}\rho\log\rho\,\d\Vol , \quad \rho=\frac{d\mu}{\d\Vol },
\end{equation}
corresponding to $\U(\rho):=\rho\log\rho$.
Since $\pr(\rho)=\rho$, the Wasserstein gradient flow associated
to $\FF$ is the Heat equation
\begin{equation}
  \label{heat}
  \frac\partial{\partial {t}}\rho_t - \lapB \rho_t =
  0\quad\text{ in }\M,\qquad
  \rho\restr{t=0}=\rho_0.
\end{equation} 
The main result of this section is the following:
\begin{teo}\label{teorema 3}
  The semigroup $\SS_t:\mu_0=\rho_0\Vol\mapsto \mu_t=\rho_t\Vol$,
  generated by the solution of the Heat equation
  \eqref{heat} is a $\lambda$-flow in $\PP_2^{ar}(\M)$
  for the logarithmic entropy functional, i.e.\
  $\mu_t$  satisfies the inequality
\begin{equation}\label{derivata distanza bisl}
  \frac{1}{2}\frac{\d^+}{\d t}W_2^2(\nu,\mu_t)+
  \frac{\lambda}{2}W_2^2(\nu,\mu_t)\leq\FF(\nu)-\FF(\mu_t)
  \quad\forall\,t\in[0,+\infty),\ \nu\in \PP_2^{ar}(\M).
\end{equation}
In particular,
the logarithmic entropy functional \eqref{entropiab}
is (strongly) displacement $\lambda$-convex, i.e.\
for every geodesic $\mu^s:[0,1]\freccia\PP_2(\M)$ 
between $\mu^0$ and $\mu^1$, we have
\begin{equation}\label{convessita geodetica2}
  \FF(\mu^s)\leq(1-s)\FF(\mu^0)+s\FF(\mu^1)
  -\frac{\lambda}{2}s(1-s)W_2^2(\mu^0,\mu^1),\quad\forall\,s\in[0,1].
\end{equation}
\end{teo}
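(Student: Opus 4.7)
The plan is to apply the Eulerian strategy developed in Sections~2 and 4 to the heat equation, which is the Wasserstein gradient flow of the logarithmic entropy ($\U(\rho)=\rho\log\rho$, so $\pr(\rho)=\rho$). The first step is to revisit Lemma~\ref{derivazione} for this specific choice. Since $\tilde\rho^s_t\,\pr'(\tilde\rho^s_t)-\pr(\tilde\rho^s_t)\equiv 0$, the second summand in \eqref{eq:DS:30b} drops out, and $\tilde D^s_t$ reduces to the integral of $-|\mathrm{Hess}\,\tilde\phi^s_t|_\sfg^2-\mathrm{Ric}(\nabla\tilde\phi^s_t,\nabla\tilde\phi^s_t)$ against $\tilde\rho^s_t\,\d\Vol$. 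The Ricci bound \eqref{Ric1lambda} and nonnegativity of the Hessian term then yield $\tilde D^s_t\le -\lambda\,\tilde A^s_t$, so that \eqref{derivata azione} becomes the pointwise inequality
\begin{equation*}
  \frac{1}{2}\frac{\partial}{\partial t}\tilde A^s_t+\frac{\partial}{\partial s}\FF(\tilde\mu^s_t)\le -\lambda\,s\,\tilde A^s_t,\qquad s\in[0,1],\ t\ge 0,
\end{equation*}
which is the precise Wasserstein analog of the smooth estimate \eqref{eq:DS:26} driving Theorem~\ref{thm:main_smooth}.

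With this pointwise inequality in hand, I would mimic the proof of Theorem~\ref{teorema 1} to produce the E.V.I.~\eqref{derivata distanza bisl}. Fix $\nu,\mu_0\in\PP_2^{ar}(\M)$ and $\eps>0$, and pick a smooth \emph{constant-speed} curve $(\rho,\phi)\in\CC(\nu,\mu_0)$ with $\tilde A^s_0\equiv L^2\le W_2^2(\nu,\mu_0)+\eps$, obtained by reparametrizing any approximate Benamou--Brenier minimizer of \eqref{riemann0}. Integrating the pointwise inequality over $[0,1]\times[0,\tau]$, using $\tilde\mu^0_t=\nu$, $\tilde\mu^1_t=\mu_t$, monotonicity of $\FF$ along the heat flow, the Benamou--Brenier bound $W_2^2(\nu,\mu_\tau)\le\tilde{\mathscr A}_\tau$, and the constant-speed identity $\int_0^1 s\,\tilde A^s_0\,\d s=L^2/2$, a short calculation produces the small-$\tau$ expansion
\begin{equation*}
  W_2^2(\nu,\mu_\tau)-W_2^2(\nu,\mu_0)\le\eps+2\tau\big(\FF(\nu)-\FF(\mu_\tau)\big)-\lambda\,\tau L^2+o_\eps(\tau).
\end{equation*}
Dividing by $\tau$ and letting $\tau\downarrow 0$ and $\eps=\eps(\tau)\downarrow 0$ jointly, with $\eps/\tau\to 0$ and $o_\eps(\tau)/\tau\to 0$, yields the differential E.V.I.~at $t=0$; since $\SS$ preserves $\PP_2^{ar}(\M)$, the semigroup property then extends it to every $t\ge 0$, proving \eqref{derivata distanza bisl}.

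This identifies $\SS$ as a $\lambda$-flow for $\FF$ on $\PP_2^{ar}(\M)$. Applying Theorem~\ref{thm:extension} with $\Xm=\PP_2(\M)$ and $\Xm_0=\PP_2^{ar}(\M)$---the approximation hypothesis \eqref{eq:2} for $\FF$ being verified in \cite{Ambrosio-Buttazzo88}---extends $\SS$ to a $\lambda$-flow on all of $\PP_2(\M)$, and Theorem~\ref{convex_1} then delivers the displacement $\lambda$-convexity \eqref{convessita geodetica2}. The main analytical difficulty is the coordinated $\eps,\tau\to 0$ limit in the second step: the smoothness constants of the chosen $\eps$-geodesic may degenerate as $\eps\downarrow 0$, so $\eps(\tau)\downarrow 0$ must be tuned (for instance $\eps(\tau)=\tau^2$) in order to kill the $o_\eps(\tau)$ remainder coming from the Taylor expansion of $\int_0^1 s\,\tilde A^s_t\,\d s$ in $t$, exploiting the instantaneous regularization of the heat semigroup.
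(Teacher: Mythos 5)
Your overall architecture (specializing Lemma \ref{derivazione} to $\pr(\rho)=\rho$ to get $\tilde D^s_t\le-\lambda\tilde A^s_t$ and the inequality \eqref{ineq}, then extending via Theorem \ref{thm:extension} and concluding convexity) matches the paper, but the crucial middle step — passing from \eqref{ineq} to the E.V.I.\ with the extra term $\tfrac\lambda2W_2^2(\nu,\mu_t)$ — is where your argument has a genuine gap. Your plan integrates \eqref{ineq} over $[0,1]\times[0,\tau]$ and replaces $\int_0^\tau\!\!\int_0^1 s\,\tilde A^s_t\,\d s\,\d t$ by $\tau\int_0^1 s\,\tilde A^s_0\,\d s+o_\eps(\tau)$, i.e.\ by a Taylor expansion in $t$ of the weighted action of the chosen $\eps$-minimizer. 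The remainder $o_\eps(\tau)$ is controlled by $\partial_t\int_0^1 s\,\tilde A^s_t\,\d s$, which through \eqref{derivata azione} involves $\tilde D^s_t$, hence Hessians of the potentials $\tilde\phi^s_t$ of the near-optimal curve; there is no a priori bound on these quantities that is uniform as $\eps\downarrow0$, since the approximating curves may degenerate. Fixing $\eps$ first is not allowed either, because the term $\eps/(2\tau)$ blows up. The proposed tuning $\eps(\tau)=\tau^2$ does not resolve this: the heat semigroup regularizes only for $t>0$ and cannot control, uniformly down to $t=0$, a family of initial curves that degenerates with $\eps$. Note also that one cannot instead compare $\int_0^1 s\,\tilde A^s_t\,\d s$ with $\tfrac12W_2^2(\nu,\mu_t)$ at each fixed $t>0$: for $\lambda>0$ the needed lower bound fails because the weight $s$ vanishes at $s=0$ (Lemma \ref{0} with $f(s)=s$ gives $L_f=+\infty$), and for $\lambda<0$ the needed upper bound fails because an action integral only dominates, never is dominated by, the squared distance.

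The paper's proof is built precisely to avoid this: it multiplies \eqref{ineq} by the integrating factor $e^{2\lambda st}$, integrates exactly in $s$ and $t$, and then invokes the \emph{weighted} Benamou--Brenier inequality \eqref{eq:step:1} of Lemma \ref{0} with $f(s)=e^{2\lambda st}$, which yields the exact bound $\tfrac{e^{\lambda t}\sfs(\lambda t)}2 W_2^2(\mu_t,\nu)\le\tfrac12\int_0^1 e^{2\lambda st}\tilde A^s_t\,\d s$ at every time $t$, with no expansion of the action of the near-minimizer in $t$. The price is the extra source term $\int_0^t\!\!\int_0^1 2\lambda r e^{2\lambda sr}\tilde F^s_r\,\d s\,\d r$, which the paper shows is $o(t)$ by a case analysis on the sign of $\lambda$: for $\lambda<0$ it has a favorable sign (nonnegativity of $\FF$), while for $\lambda>0$ the entropies $\tilde F^s_r$ along the flowed curves are bounded using the convexity-type estimate \eqref{eq:DS:42} available from the already established flow (here \eqref{eq:DS:13} and the Lipschitz reparametrization of Lemma \ref{0} are used). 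Your write-up uses Lemma \ref{0} only to normalize the curve at $t=0$, not in this weighted form, and it contains no substitute for the entropy bound along the perturbed curves; as it stands, the claimed joint $\eps,\tau\to0$ limit is the missing ingredient and is not merely a technical tuning issue.
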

\begin{proof}
  By Theorem \ref{thm:extension},
  if $\SS$ is a $\lambda$-flow for the
  functional \eqref{entropiab} in $\PP_2^{ar}(\M)$ then 
  $\FF$ is (strongly) displacement $\lambda$-convex.
  In order to prove that $\SS$ is a $\lambda$-flow, since
  (\ref{eq:1}a,b) are immediate, we check that $\SS$
  satisfies the E.V.I.\ 
  \eqref{evi1} and
  we argue as in the proof of Theorem \ref{teorema 1}
  and Theorem \ref{thm:main_smooth}.
  We thus fix $\eps>0$ and we choose a smooth curve
  $(\rho,\phi)\in \CC(\nu,\mu)$
  \begin{equation}
    \label{epsilon-geodetica2}
    \int\nolimits_0^1 \tilde A^s_0\,\d s=
    \int\nolimits_0^1\int\nolimits_{\M}
    |\nabla\phi^s|_\sfg^2\, \rho^s\,\d\Vol \d s
    \leq W_2^2(\nu,\mu)
    +\varepsilon^2.
  \end{equation}
  By a standard re-parametrization technique (see next Lemma \ref{0}),
  we can also
  assume that
  \begin{equation}
    \label{eq:DS:13}
    W_2(\mu^{s_0},\mu^{s_1})\le L|s_0-s_1|,\quad
    L^2:=W_2^2(\nu,\mu)
    +\varepsilon^2\qquad
    \forall\, s_0,s_1\in [0,1];\qquad
    \mu^s:=\rho^s\,\Vol.
  \end{equation}
  We keep the same notation of Theorem \ref{teorema 1}
  and Lemma \ref{derivazione},
  i.e.
  \begin{equation}
    \label{eq:DS:5}
    \tilde\mu^s_t=\tilde\rho^s_t\,\Vol:=\SS_{st}(\mu^s),\quad
    \tilde A^s_t:=\int_{\M}|\nabla\tilde\phi^s_t|_\sfg^2\,
    \tilde\rho^s_t\,
    \d\Vol,\quad
    \tilde F^s_t =\FF(\tilde\mu^s_t)
  \end{equation}
  where $\tilde\phi^s_t$ is family of potentials associated to
  $\tilde\rho^s_t$ as in \eqref{eq:DS:29c}.
  Since $\pr(\rho)=\rho$ the term $\rho \pr'(\rho)-\pr(\rho)$ in the
  definition
  of $\tilde D^s_t$ vanishes, so that in the present case
  \begin{equation}
    \label{eq:DS:29e}
    \tilde D^s_t=-\int\nolimits_{\M}\Big(|\mathrm{Hess}\,\tilde\phi^s_t|_\sfg^2 +
    \mathrm{Ric}\,(\nabla\tilde\phi^s_t,\nabla\tilde\phi^s_t)
    \Big)\,\tilde\rho^s_t\,\d\Vol
    \topref{Ric1lambda}\le -\lambda
    \int\nolimits_{\M}|\nabla\tilde\phi^s_t|_\sfg^2\tilde\rho^s_t\,
    \d\Vol =-\lambda \tilde A^s_t,
  \end{equation}
  \eqref{derivata azione} yields
  the differential inequality
  \begin{equation}\label{ineq}
   \frac 12 \frac\partial{\partial t}\tilde A^s_t +\lambda s\tilde A^s_t +
    \frac\partial{\partial s}\tilde F^s_t \leq0\qquad \forall\,s\in[0,1],\quad\forall\,t>0.
  \end{equation}
Multiplying inequality \eqref{ineq} by $e^{2\lambda s t}>0$ we obtain
\begin{equation}
  \label{eq:step:4}
  \frac 12\frac\partial{\partial t} \Big(e^{2\lambda s t}\tilde A^s_t \Big)+
  \frac \partial{\partial s}\Big(e^{2\lambda s t}\tilde F^s_t \Big)\le 2\lambda t\,e^{2\lambda
    st}\,\tilde F^s_t.
\end{equation}
Integrating with respect to $s$ from $0$ to $1$ we get
\begin{equation}
  \label{eq:step:6}
  \frac \d{\d t}\Big(\frac 12\int_0^1 e^{2\lambda st}\tilde A^s_t \,\d s\Big)+
  e^{2\lambda t}\tilde F^1_t-\tilde F^0_t\leq\int_0^1 2\lambda\, t\,e^{2\lambda
    st}\tilde F^s_t \,\d s,
\end{equation}
and a further integration with respect to $t$ yields
\begin{equation}
  \label{eq:step:7}
  \frac 12\int_0^1 e^{2\lambda st}\tilde A^s_t \,\d s-
  \frac 12\int_0^1 A^s_0\,\d s+
  \sfE_{2\lambda}(t)\FF(\mu_t)-t\FF(\nu)
  \leq\int_0^t\int_0^1 2\lambda \,r\,e^{2\lambda
    sr}\,\tilde F^s_r\,\d s\,\d r.
\end{equation}
Applying the next Lemma \ref0,
since for $\lambda \neq 0$
$
  \int_0^1 \frac 1{e^{2\lambda st}}\,\d s=\frac{1-e^{-2\lambda
      t}}{2\lambda t}=\frac 1{e^{\lambda t}\sfs(\lambda t)},\quad
  \sfs(t):=\frac t{\sinh(t)},
  $
we get 
\begin{align}
  \label{eq:step:8}
  \frac {e^{\lambda t}\sfs(\lambda t)}{2}W_2^2(\mu_t,\nu)-
  \frac 12 W_2^2(\mu,\nu)+
   \sfE_{2\lambda}(t)\FF(\mu_t)-t\FF(\nu)
  \leq\int_0^t\int_0^1 2\lambda re^{2\lambda
    sr}\tilde F^s_r\,\d s\d r+\frac{\varepsilon^2}2.
\end{align}
\underline{Let us first consider the case $\lambda<0$}:
being $\FF$ nonnegative, 
the right hand side in \eqref{eq:step:8}
is less or equal than $\varepsilon$; since
$\varepsilon>0$ is arbitrary, we obtain the same inequality
with $0$ in the right-hand side.
Since $t^{-1}\sfE_{2\lambda}(t)\to1$ as $t\downarrow0$ and
$\sfs(0)=1$, we thus obtain
\begin{equation}
  \label{eq:step:9}
  \frac 12\frac {\d^+}{\d t}\Big({e^{\lambda t}\sfs(\lambda t)}W_2^2(\mu_t,\nu)
  \Big)\Big|_{t=0}+\FF(\mu)\le \FF(\nu).
\end{equation}
Being $\sfs'(0)=0$ it is then easy to check that
\begin{displaymath}
  \frac {\d^+}{\d t}\Big({e^{\lambda t}\sfs(\lambda t)}W_2^2(\mu_t,\nu)
  \Big)\Big|_{t=0}=
  \frac {\d^+}{\d t}\Big(W_2^2(\mu_t,\nu)
  \Big)\Big|_{t=0}+
  \lambda\, W_2^2(\mu,\nu),
\end{displaymath}
which yields \eqref{derivata distanza bisl}.

\noindent
\underline{Let us now consider the case $\lambda>0$}.
By \eqref{eq:DS:13} we can apply the estimate
\eqref{eq:DS:42} obtaining
\begin{align*}
  r\tilde F^s_r&=   r\FF(\SS_{rs}(\mu^s))
  \topref{eq:DS:42}\le r\Big((1-s)\FF(\mu^0)+s\FF(\mu^1)-\frac\lambda2 s(1-s)W_2^2(\mu^0,\mu^1)+
  \frac{\eps^2}{2\sfE_\lambda(rs)}s(1-s)\Big)\\
  &\le   r\Big(\FF(\mu^0)+\FF(\mu^1)\Big)+\eps^2,
\end{align*}
since $s\in [0,1]$ and $rs/\sfE_{\lambda}(rs)\le 1$. We thus get
\begin{equation}
  \label{eq:DS:30e}
  \int_0^t\int_0^1 2\lambda \, r\,e^{2\lambda s r}\tilde F^s_r\,\d s\,\d r\le
  2\lambda t e^{2\lambda t}\Big(t\big(\FF(\mu_0)+\FF(\mu_1)\big)+\eps^2\Big);
\end{equation}
inserting this bound in \eqref{eq:step:8} and passing to the limit as
$\eps\downarrow0$ 
we find
\begin{equation}\label{eq100}
 \frac {e^{\lambda t}\sfs(\lambda t)}{2}W_2^2(\mu_t,\nu)-
  \frac 12 W_2^2(\mu,\nu)+
   \sfE_{2\lambda}(t)\FF(\mu_t)-t\FF(\nu)
  \leq  2\lambda t^2 e^{2\lambda t}\Big(\FF(\mu_0)+\FF(\mu_1)\Big).
\end{equation} 
Dividing by $t$ and letting $t$ tend to 0 the second term vanishes, so
we obtain the EVI also in the case in which $\lambda>0$.
\end{proof}
\begin{lemma}\label{0}
  Let $\nu,\mu\in \PP_2^{ar}(\M)$ and
  let $(\rho,\phi)\in \CC(\nu,\mu)$ be
  a smooth solution of the continuity equation
  \begin{displaymath}
    \frac\partial{\partial s}
    \rho^s+\nabla\cdot(\rho^s\,\nabla\phi^s)=0
    \quad\text{in }
    [0,1]\times \M\quad\text{with}\quad \rho^0\Vol=\nu,\:\rho^1\Vol=\mu\quad\text{and} \quad A^s:=\int_\M |\nabla\phi^s|_\sfg^2\,\rho^s\,\d\Vol.
  \end{displaymath}
  For every positive function $f\in C^\infty[0,1]$
  \begin{equation}
    \label{eq:step:1}
    W_2^2(\nu,\mu)\le L_f
    \int_0^1 f(s)A^s\,ds,\quad
    \text{where}\quad
    L_f:=\int_0^1 \frac 1{f(s)}\,ds.
  \end{equation}
  Moreover, for every $\eps>0$ there exists a smooth rescaling
  $\sfs_\eps:[0,1]\to[0,1]$ so that the re-parametrized families
  \begin{equation}
    \label{eq:DS:16}
    \bar\rho^r:=\rho^{\sfs_\eps(r)},\quad
    \bar\phi^r:=\sfs_\eps'(r)\phi^{\sfs_\eps(r)},\quad
    \bar\mu^r:=\bar\rho^r\,\Vol
  \end{equation}
  satisfy
  \begin{equation}
    \label{eq:DS:29}
    (\bar\rho,\bar\phi)\in \CC(\nu,\mu),\quad
    W_2(\bar\mu^{r_0},\bar\mu^{r_1})\le L|r_0-r_1|,\quad
    L^2\le \int_0^1 A^s\,\d s+\eps^2.
  \end{equation}
\end{lemma}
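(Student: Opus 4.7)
The plan is to prove both parts by a time re-parametrization applied to the Benamou-Brenier characterization \eqref{riemann0}. The key scaling rule that I will use repeatedly is the following: if $\vartheta:[0,1]\to[0,1]$ is a smooth increasing diffeomorphism with $\vartheta(0)=0$, $\vartheta(1)=1$ and one sets $\tilde\rho^\sigma:=\rho^{\vartheta(\sigma)}$, $\tilde\phi^\sigma:=\vartheta'(\sigma)\,\phi^{\vartheta(\sigma)}$, then the chain rule gives $(\tilde\rho,\tilde\phi)\in\CC(\nu,\mu)$ and the associated action density transforms as $\tilde A^\sigma=(\vartheta'(\sigma))^2 A^{\vartheta(\sigma)}$.

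For \eqref{eq:step:1}, I would define $\vartheta$ as the inverse of $\sigma(s):=L_f^{-1}\int_0^s f(r)^{-1}\,dr$, so that $\vartheta'(\sigma)=L_f\,f(\vartheta(\sigma))$; applying \eqref{riemann0} to the admissible pair $(\tilde\rho,\tilde\phi)$ and then changing back to the $s$ variable yields $W_2^2(\nu,\mu)\le\int_0^1(\vartheta'(\sigma))^2 A^{\vartheta(\sigma)}\,d\sigma=L_f\int_0^1 f(s)\,A^s\,ds$, which is exactly \eqref{eq:step:1}. (The same inequality can alternatively be obtained from the length bound $W_2(\nu,\mu)\le\int_0^1\sqrt{A^s}\,ds$ combined with Cauchy--Schwarz, but the re-parametrization approach prepares directly for part (ii).)

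For the almost constant-speed re-parametrization described in \eqref{eq:DS:29}, I would choose the weight $g_\eps(s):=\sqrt{A^s+\eps^2}$, which is smooth and bounded away from zero, set $T_\eps:=\int_0^1 g_\eps(r)\,dr$, and define $\sigma_\eps(s):=T_\eps^{-1}\int_0^s g_\eps(r)\,dr$, so that $\sfs_\eps:=\sigma_\eps^{-1}$ is a smooth diffeomorphism of $[0,1]$. By construction the families in \eqref{eq:DS:16} fulfil $(\bar\rho,\bar\phi)\in\CC(\nu,\mu)$, and the choice of $g_\eps$ forces $\bar A^r=(\sfs_\eps'(r))^2 A^{\sfs_\eps(r)}=T_\eps^2\,A^{\sfs_\eps(r)}/(A^{\sfs_\eps(r)}+\eps^2)\le T_\eps^2$. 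Applying \eqref{riemann0} to the restriction of $(\bar\rho,\bar\phi)$ to any subinterval $[r_0,r_1]$ (after an affine rescaling to $[0,1]$, which multiplies the action by $(r_1-r_0)^2$) gives $W_2^2(\bar\mu^{r_0},\bar\mu^{r_1})\le(r_1-r_0)^2 T_\eps^2$, so the Lipschitz constant $L:=T_\eps$ does the job. The required bound on $L^2$ is then the elementary Cauchy--Schwarz inequality $T_\eps^2=\bigl(\int_0^1 g_\eps\bigr)^2\le\int_0^1 g_\eps^2\,ds=\int_0^1 A^s\,ds+\eps^2$.

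No serious obstacle is expected: smoothness of $\sfs_\eps$ is guaranteed precisely by the $\eps$-regularization, which keeps $g_\eps$ smooth and strictly positive, and all the remaining manipulations are routine applications of the chain rule and change-of-variable formula, once the scaling identity $\tilde A^\sigma=(\vartheta'(\sigma))^2 A^{\vartheta(\sigma)}$ has been recorded.
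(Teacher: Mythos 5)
Your proposal is correct and follows essentially the same route as the paper: the same reparametrization rule $\bar A^r=(\sfs'(r))^2A^{\sfs(r)}$, the same choice of weight (your $g_\eps=\sqrt{A^s+\eps^2}$ is exactly the reciprocal of the paper's $f_\eps$, so your $T_\eps$ equals the paper's $L_{f_\eps}$), and the same Cauchy--Schwarz bound $T_\eps^2\le\int_0^1A^s\,\d s+\eps^2$. The only cosmetic difference is that you justify the local estimate $W_2^2(\bar\mu^{r_0},\bar\mu^{r_1})\le(r_1-r_0)\int_{r_0}^{r_1}\bar A^r\,\d r$ explicitly via an affine rescaling, a step the paper uses implicitly.
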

\begin{proof}
  Let us consider the smooth increasing map
  ${\sf r}:[0,1]\to[0,1]$
  \begin{displaymath}
    {\sf r}(s):=L_f^{-1}\int\nolimits_0^s \frac 1{f(s)}\,ds
    \quad
    \text{and its inverse }
    {\sf s}:={\sf r}^{-1}\quad
    \text{with}\quad
    {\sf s}'({\sf r}(s))=L_f f(s).
  \end{displaymath}
  It is immediate to check that
  the smooth (reparametrized) curve
  \begin{equation}
    \label{eq:step:2}
    \bar\rho^r(x):=\rho^{{\sf s}(r)}(x),\quad
    \bar \phi^r(x):= {\sf s}'(r)\phi^{{\sf s}(r)}(x)
  \end{equation}
  belongs to $\CC(\nu,\mu)$.
  It follows that
  \begin{displaymath}
    W_2^2(\nu,\mu)\le \int\nolimits_0^1 \bar A^r\, \d r,\quad
    \text{where}\quad
    \bar A^r:=\int\nolimits_{\M}|\nabla\bar\phi^r|_\sfg^2\,
    \bar\rho^r
    \,\d\Vol
    \topref{eq:step:2}= \big({\sf s}'(r)\big)^2 A^{{\sf s}(r)},
  \end{displaymath}
  so that
  \begin{displaymath}
    \int\nolimits_0^1 \bar A^r\d r =
    \int\nolimits_0^1 A^{{\sf s}(r)}\big({\sf s}'(r)\big)^2\,\d r =
    \int\nolimits_0^1 A^s{\sf s}'({\sf r}(s))\,\d s =
    L_f \int\nolimits_0^1 f(s)A^s\,\d s.
  \end{displaymath}
  Choosing now the re-parametrization $\sfs_\eps$ corresponding to
  the choice 
  \begin{equation}
    \label{eq:DS:33}
    f_\eps(s):=\frac{1}{\sqrt{\eps^2+A^s}},\quad
    L_{f_\eps}:=\int_0^1 \sqrt{\eps^2+A^s}\,\d s,\quad
    L_{f_\eps}^2\le \eps^2+
    \int_0^1 A^s\,\d s,    
  \end{equation}
  we get
  \begin{align*}
    W^2(\bar\mu^{r_0},\bar\mu^{r_1})&\leq|r_1-r_0|\int_{r_0}^{r_1}
    \bar A^r\,\d r=
    |r_1-r_0| L_{f_\eps}^2 \int_{r_0}^{r_1} A^{\sfs(r)}
    f^2_\eps(\sfs(r))\,\d r
    \le (r_1-r_0)^2 L_{f_\eps}^2,
  \end{align*}
  which yields \eqref{eq:DS:29}.
 \end{proof}


\end{document}